\newtheorem{coro}{Corollary}[section]
\newtheorem{defi}{Definition}[section]
\newtheorem{lem}{Lemma}[section]
\newtheorem{prob}{Problem}[section]
\newtheorem{prop}{Proposition}[section]
\newtheorem{thm}{Theorem}[section]
\newtheorem{rem}{Remark}[section]
\renewenvironment{abstract}{%
        \small
        \quotation
         \noindent {\bfseries \abstractname } }%
      {\if@twocolumn\else\endquotation\fi}
\def\f{\frac}
\def\d{\mathrm d}
\def\e{\mathrm e}
\def\i{\mathrm i}
\def\R{\mathbb{R}}
\def\D{\mathbb{D}}
\def\cU{\mathcal{U}}
\def\de{\delta}
\def\De{\Delta}
\def\al{\alpha}
\def\be{\beta}
\def\Ga{\Gamma}
\def\ve{\varepsilon}
\def\la{\lambda}
\def\La{\Lambda}
\def\vp{\varphi}
\def\na{\nabla}
\def\Om{\Omega}
\def\ov{\overline}
\def\pa{\partial}
\def\wh{\widehat}
\def\wt{\widetilde}
\def\LL{{L^2(\Om)}}
\def\HH{{H^2(\Om)}}
\def\dt{\d t}
\def\dr{\d r}
\def\da{\d \al}
\title{\bf
An inverse problem for distributed order time-fractional diffusion equations
}
\author{
Zhiyuan LI$^\dag$,\qquad Kenichi FUJISHIRO$^\ddag$, \qquad Gongsheng LI$^\dag$
}
\date{}
\begin{document}
\maketitle
\renewcommand{\thefootnote}{\fnsymbol{footnote}}
\footnotetext{\hspace*{-5mm} 
\begin{tabular}{@{}r@{}p{13cm}@{}} 
& Manuscript last updated: \today.
\\
$^\dag$& School of Mathematics and Statistics, Shandong University of Technology, Zibo, Shandong 255049, China
E-mail: zyli@sdut.edu.cn, ligs@sdut.edu.cn\\
$^\ddag$& National Institute of Technology, Ishikawa College, Japan. Email: fujishiro@ishikawa-nct.ac.jp
\end{tabular}}

\begin{abstract}
This paper deals with the distributed order time-fractional diffusion equations with non-homogeneous Dirichlet (Nuemann) boundary condition. We first prove the wellposedness of the weak solution to the initial boundary value problem for the distributed order time-fractional diffusion equation by means of eigenfunction expansion, which ensure that the weak solution has the classical derivatives. We next give a Harnack type inequality of the solution in the frequency domain under the Laplace transform, from which we further show a uniqueness result for an inverse problem in determining the weight function in the distributed order time derivative from point observation.

\vskip 4.5mm

\noindent\begin{tabular}{@{}l@{ }p{10cm}} {\bf Keywords } &
distributed order fractional diffusion,
inverse problem,
uniqueness, \quad
Harnack's inequality,
Laplace transform.

\end{tabular}

\vskip 4.5mm

\noindent{\bf AMS Subject Classifications } 35R11, 35R30, 26A33, 44A10.

\end{abstract}

\section{Introduction and main results}
\label{sec-intro-distri}

In this paper, we consider the following initial-boundary value problems 
(IBVPs, in short)
\begin{equation}
\label{equ-u-distri}
\left\{
\begin{alignedat}{2}
&\D^{(\mu)}_t u - \De u + p(x)u=0
&\quad& \mbox{in $\Om\times(0,T)$,}
\\
&u|_{t=0}=0 &\quad& \mbox{in $\Om$,}
\\
&u=g \mbox{ or } \pa_{\nu}u=g
&\quad& \mbox{on $\pa\Om\times(0,T)$.}
\end{alignedat}
\right.
\end{equation}
  Here $T>0$ is a fixed constant and $\Om$ is a bounded domain in $\R^d$, 
$d=1,2,3$, with a smooth boundary $\pa\Om$, which is defined e.g., by some $C^2$ functional relations. 
In \eqref{equ-u-distri}, $p\in L^\infty(\Omega)$ such that $p(x)\ge 0$ and 
$\pa_{\nu}u$ is defined by 
$\pa_{\nu}u:=\sum_{i=1}^{d}\nu_i\pa_{i}u$, where $\nu=(\nu_1,\cdots,\nu_d)$ 
is the unit outwards normal vector to $\pa\Om$.
$\D^{(\mu)}_t$ denotes a distributed order fractional derivative defined by 
$$
\D^{(\mu)}_t \vp(t) = \int_0^1 \pa^\al_t \vp(t) \mu(\al) \da,
$$
where $\pa_t^\al$ is the Caputo derivative of order $\al$:
$$
\pa^\al_t \vp(t) = 
\left\{
\begin{alignedat}{2}
&\vp(t), &\quad& \al = 0, 
\\
&\f{1}{\Ga(1-\al)}\int_0^t \f{\vp'(\tau)}{(t-\tau)^\al}\d\tau,
&\quad& 0<\al <1,
\\
&\vp'(t),&\quad& \al=1. 
\end{alignedat}
\right.
$$	
The conditions on $g$ and $\mu$ involved in $\D_t^{(\mu)}$ will be specified later in the statement of the main theorem.

The distributed order fractional derivative was firstly considered by Caputo \cite{C}. After that, this model has received great attention in applied disciplines due to the succeeding in modeling the ultraslow diffusion processes whose mean square displacement (MSD) admits the  logarithmic growth, say, MSD behaves like $\langle \De x^2\rangle\sim C\log t$ as $t\to\infty$ (e.g., \cite{MMPG08}), e.g., polymer physics and kinetics of particles moving in the quenched random force fields (see e.g. \cite{CRSG03}, \cite{N04}, \cite{SCK04} and the references therein).
Soon the mathematical researches on the analyzing the forward problem, such as the IBVPs for the diffusion equation with distributed order fractional derivatives, were growing rapidly, see, e.g. \cite{K08}, \cite{LLY-FCAA}, \cite{LLY-CMA}, \cite{L-FCAA} and the references therein. Namely, \cite{K08} investigated the properties of the fundamental solutions to the Cauchy problems for both the ordinary and the partial fractional differential equations with distributed order derivatives with $\mu\in C^1[0,1]$. The long- and short-time asymptotic behavior were discussed in detail in \cite{LLY-FCAA} by applying an argument similar to the derivation of the Watson lemma. \cite{L-FCAA} showed the uniqueness results for the IBVPs for the diffusion equation of distributed orders from an appropriate maximum principle. By using the Fourier method of variables separation, \cite{LLY-CMA} constructed an explicit solutions of the distributed-order time-fractional diffusion equations, with Dirichlet boundary conditions. Very recently, \cite{kubica} proved existence of a weak and regular solution for general uniformly elliptic operator under the assumption that the weight function is only integrable on the interval $[0,1]$.

Other than the above mentioned aspects for the forward problems where all the coefficients such as $\mu$ and $g$ in the mathematical model are known, in most instances the parameters which characterize the diffusion processes cannot be measured directly or easily, for example, as is known, the weight function $\mu$ in the model \eqref{equ-u-distri} should be determined by the inhomogeneity of the media, but it is not clear which physical law can relate the inhomogeneity to $\mu$, which requires one to use inverse problems to identify these physical quantities from some additional information that can be observed or measured practically. For this, we propose the following inverse problem.
\begin{prob}
\label{def-u-distri}
  Let $x_0\in\ov{\Om}$ be arbitrarily fixed. 
  We want to determine the weight function $\mu$ in $[0,1]$ by the overposed 
data $u(x_0,t)$, $t\in(0,T)$.
\end{prob}
Inverse problems in determining these unknown parameters in the model are not only important by itself, but also significant in its applications. However, the publications on the inverse problems to fractional diffusion equations are rather limited to the best of the authors' knowledge. As is known, if formally taking $\mu:=\sum_{j=1}^\ell q_j\de(\cdot-\al_j)$, where $\de$ is the Dirac-delta function, the distributed order time-fractional diffusion equation is degenerated into the so-called multi-term counterpart. For the multi-term case, there exists a large and rapidly growing number of publications related to the inverse problems in determining $\mu$.
\cite{HNWY} established a formula of reconstructing the order of fractional derivative in time in the fractional diffusion equation by time history at one fixed interior point. We refer to \cite{LY14} in which the authors pointed out that the one interior point observation is enough in reconstructing the unknown fractional orders, and \cite{LIY} where the uniqueness for reconstructing the unknown fractional order and potential was proved with the infinite measurement: Dirichlet-to-Neumann map. We also refer to \cite{CNYY} for the recovery of fractional order and diffusion coefficient simultaneously from one endpoint observation. The uniqueness result was proved based on the eigenfunction expansion of the weak solution to the IBVP and the Gel'fand-Levitan theory. It reveals that analyticity of the solution to the IBVPs was well performed in determining the fractional orders in the multi-term case. In the case of $\mu\in C[0,1]$, we refer to \cite{LLY-CMA} in which the uniqueness for the inverse problem in determining the weight function $\mu$ was proved after establishing the analyticity of the solution. Very recently, in one-dimensional case, \cite{RZ16} studied an inverse problem similar to that in \cite{LLY-CMA} by using the value of the solution $u$ in the time interval $(0,\infty)$. 

However, it turns out that the study on this kind of inverse problems of the recovery of the fractional orders or the continuous counterpart $\mu$ in the model \eqref{equ-u-distri} is far from satisfactory since all the publications either assume the homogeneous boundary condition (\cite{CNYY}, \cite{LIY}, \cite{LY14} and \cite{LLY-CMA}) or study this inverse problem by the measurement on $t\in(0,\infty)$ (\cite{HNWY} and \cite{RZ16}).

In this paper, by establishing a Harnack type inequality and using strong maximum principle of the elliptic equations, we generalize the result in \cite{RZ16}. Before giving the main result for our inverse problem, we first give a definition which gives the class of weight function under determination.
\begin{defi}
We call a function $\mu\in C[0,1]$ as a finite oscillation function if for any $c\in\R$, the following set $\{\al;\mu(\al)=c\}$ has at most finite number of isolated point.
\end{defi}
We then introduce an admissible set of the weight function $\mu$:
$$
\cU:=\{\mu\in C[0,1]; 
\mbox{ $\mu\ge, \not\equiv0$ and $\mu$ is a finite oscillatory function.}\}.
$$
  We further assume that the boundary values are positive and sufficiently 
smooth;
\begin{align*}
	&\mathcal{G}_1:=\{g\in C^{\infty}_{0}((0,T);H^{7/2}(\pa\Om)); 
		g\ge0\ \mbox{and}\ g\not\equiv0\}, \\
	&\mathcal{G}_2:=\{g\in C^{\infty}_{0}((0,T);H^{5/2}(\pa\Om)); 
		g\ge0\ \mbox{and}\ g\not\equiv0\}.
\end{align*}
 Now we are ready to state our main results:
\begin{thm}
\label{thm-IP}
$(a)$  Let $x_{0}\in\Om$ be a fixed point and let $u$ and $\wt u$ be the solutions 
to the problem \eqref{equ-u-distri} with respect to $(\mu,g)$ and 
$(\wt\mu,\wt g)$ in $\cU\times\mathcal{G}_1$ $($ Dirichlet boundary condition $)$.
  Then $\mu=\wt\mu$ in $[0,1]$ provided the overposed data 
$u(x_0,\cdot)=\wt u(x_0,\cdot)$ in $(0,T)$.

$(b)$ Assuming the potential $p(x)\ge c_0>0$ for some constant $c_0$. Let $x_{0}\in\partial\Om$ be a fixed point and let $u$ and $\wt u$ be the solutions 
to the problem \eqref{equ-u-distri} with respect to $(\mu,g)$ and 
$(\wt\mu,\wt g)$ in $\cU\times\mathcal{G}_2$ $($ Neumann boundary condition $)$.
  Then $\mu=\wt\mu$ in $[0,1]$ provided the overposed data 
$u(x_0,\cdot)=\wt u(x_0,\cdot)$ in $(0,T)$.

\end{thm}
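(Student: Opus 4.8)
The plan is to reduce the problem, via the Laplace transform in $t$, to a one–parameter family of elliptic boundary value problems, and then to exploit positivity of the data together with the frequency–domain Harnack inequality established above. Since $g\in\mathcal G_j$ is smooth and compactly supported in $(0,T)$, the solution $u$ of \eqref{equ-u-distri} extends to $\Om\times(0,\infty)$ with $g\equiv0$ for $t>T$, and the solution representation admits an analytic continuation showing that $t\mapsto u(x_0,t)$ is real analytic on $(0,\infty)$; hence $u(x_0,\cdot)=\wt u(x_0,\cdot)$ on $(0,T)$ already forces the identity on all of $(0,\infty)$ — this is precisely the point where the present argument improves on \cite{RZ16}, which used data on $(0,\infty)$. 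Writing $U(x,s)=\int_0^\infty\e^{-st}u(x,t)\,\dt$ and using $u|_{t=0}=0$, so that $\mathcal L[\pa_t^\al u]=s^\al U$ and $\mathcal L[\D^{(\mu)}_t u]=\Phi_\mu(s)U$ with $\Phi_\mu(s):=\int_0^1 s^\al\mu(\al)\,\da$, the equation transforms into
\[
(-\De+p+\Phi_\mu(s))U(\cdot,s)=0\ \text{in }\Om,\qquad U(\cdot,s)=\wh g(\cdot,s)\ \big(\text{resp. }\pa_\nu U(\cdot,s)=\wh g(\cdot,s)\big)\ \text{on }\pa\Om,
\]
and likewise for $\wt U$ with $(\Phi_{\wt\mu}(s),\wh{\wt g}(\cdot,s))$; moreover $U(x_0,s)=\wt U(x_0,s)$ for every $s>0$.

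Next I record the sign information on which everything rests. As $\mu\ge,\not\equiv0$ we have $\Phi_\mu(s)>0$ for $s>0$, so the zeroth–order coefficient $p+\Phi_\mu(s)$ is $\ge0$ (and $\ge c_0>0$ in case (b), which also makes the Neumann problem uniquely solvable), and as $g(\cdot,t)\ge0,\not\equiv0$ its transform $\wh g(\cdot,s)$ is $\ge0$ and $\not\equiv0$ on $\pa\Om$. By the strong maximum principle (supplemented by Hopf's lemma in the Neumann case) $U(\cdot,s)>0$ in $\Om$ in case (a), and $U(\cdot,s)>0$ on $\ov\Om$ in case (b); the same holds for $\wt U$, so $U(x_0,s)=\wt U(x_0,s)>0$ for all $s>0$. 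Since the map $\nu\mapsto\big(s\mapsto\int_0^1 s^\al\nu(\al)\,\da\big)$ is injective on $C[0,1]$ — after the change of variable $r=\log s$ it is a Laplace transform of the compactly supported function $\nu$ — it suffices to prove that $\Phi_\mu\equiv\Phi_{\wt\mu}$ on $(0,\infty)$.

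The crux is to rule out $\Phi_\mu\not\equiv\Phi_{\wt\mu}$, and this is where the Harnack–type inequality in the frequency variable, together with the strong maximum principle, is used. Suppose $\mu\ne\wt\mu$; then $\Phi_\mu-\Phi_{\wt\mu}$ is a nonzero function, analytic on $(0,\infty)$, hence with isolated zeros, and a Watson–type asymptotic analysis of $\Phi_\mu$ and $\Phi_{\wt\mu}$ as $s\to0^+$ and $s\to\infty$ — for which the finite–oscillation hypothesis on $\mu,\wt\mu$ is exactly what is needed in the merely continuous setting — allows one to fix a nonempty interval $I\subset(0,\infty)$ on which, say, $\Phi_\mu(s)>\Phi_{\wt\mu}(s)$. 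For $s\in I$, let $V(\cdot,s)$ be the solution of the same boundary value problem but with parameter $\Phi_{\wt\mu}(s)$ and datum $\wh g(\cdot,s)$; then $w:=V-U$ has vanishing boundary datum and $(-\De+p+\Phi_\mu(s))w=(\Phi_\mu(s)-\Phi_{\wt\mu}(s))V>0$ in $\Om$, so the maximum principle and then the strong maximum principle (Hopf's lemma, in case (b)) give $U(\cdot,s)<V(\cdot,s)$, in particular $U(x_0,s)<V(x_0,s)$: the point value at $x_0$ is strictly decreasing in the parameter. This monotonicity, rendered quantitative and uniform in $s$ by the frequency–domain Harnack inequality, is what lets one compare $U(x_0,s)$ with $\wt U(x_0,s)$ in spite of the different boundary data $\wh g(\cdot,s)\ne\wh{\wt g}(\cdot,s)$; carrying out this comparison (together with the companion estimate obtained by interchanging the roles of the two data, and using the sign of $\Phi_\mu-\Phi_{\wt\mu}$ on $I$) contradicts $U(x_0,s)\equiv\wt U(x_0,s)$ on $I$. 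Hence $\Phi_\mu\equiv\Phi_{\wt\mu}$, and therefore $\mu=\wt\mu$ on $[0,1]$. The only structural difference between (a) and (b) is that in (b) the observation point $x_0$ lies on $\pa\Om$ and the Dirichlet solution operator is replaced by the Neumann one, whence the need for $p\ge c_0>0$ to preserve coercivity and the maximum/Hopf principles.

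The main obstacle is precisely this last comparison: because $g$ and $\wt g$ may differ, there is no naive sub/super–solution argument relating $u$ and $\wt u$, and one genuinely needs the quantitative Harnack inequality in the frequency variable to recover the scalar $\Phi_\mu(s)$ from the single point value $U(x_0,s)$ uniformly in $s$. A secondary difficulty is the low regularity of $\mu$: since $\mu$ is only continuous, $\Phi_\mu$ has no clean power–series expansion at $s=0^+$ or $s=\infty$, and it is the finite–oscillation condition that makes the required endpoint asymptotics — and hence the determination of the sign of $\Phi_\mu-\Phi_{\wt\mu}$ — tractable.
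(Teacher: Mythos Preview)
Your proposal takes a route that is not the paper's, and it has two genuine gaps.

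\medskip
\textbf{First gap: the analyticity claim.} You assert that $t\mapsto u(x_0,t)$ is real analytic on $(0,\infty)$, and use this to promote the equality $u(x_0,\cdot)=\wt u(x_0,\cdot)$ from $(0,T)$ to $(0,\infty)$. This step is doing all the work of passing from finite-time to infinite-time data, which is precisely the point of the theorem; but no argument is given, and the paper neither proves nor uses analyticity of the solution. For distributed-order equations with non-homogeneous boundary data this is nontrivial (the analyticity result in \cite{LLY-CMA} is for homogeneous boundary). If analyticity were available, the whole short-time machinery of Section~\ref{sec-proof} would be unnecessary and the argument would collapse to the one sketched in Remark~3.1; the paper is explicit that its contribution is to avoid infinite-time data.

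\medskip
\textbf{Second gap: the comparison with different boundary data.} Your monotonicity step compares $U$ (boundary datum $\wh g$, parameter $\Phi_\mu(s)$) with an auxiliary $V$ (same datum $\wh g$, parameter $\Phi_{\wt\mu}(s)$) and correctly gives $U(x_0,s)<V(x_0,s)$ on~$I$. But you need to compare $U(x_0,s)$ with $\wt U(x_0,s)$, and $\wt U$ has boundary datum $\wh{\wt g}$, not $\wh g$. You write that the Harnack inequality ``rendered quantitative and uniform in $s$'' bridges this, but no concrete mechanism is proposed: the Harnack inequality of Lemma~\ref{lem-har} controls $\sup/\inf$ of a single nonnegative solution, and says nothing about two solutions with different boundary data. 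Without this link the contradiction does not close.

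\medskip
\textbf{How the paper actually argues.} The paper never takes the Laplace transform of $u$ and $\wt u$ themselves. It works in the time domain: apply $J^2$, form $v:=J^2u-J^2\wt u$, and compute that $v$ satisfies
\[
\D_t^{(\mu)} v-\De v+p(x)v=\int_0^1(\wt\mu(\al)-\mu(\al))\,J^{2-\al}\wt u\,\d\al.
\]
The finite-oscillation hypothesis is used directly on $\mu-\wt\mu$ (not on asymptotics of $\Phi_\mu$) to force the right-hand side to have one sign on a short interval $\Om\times(0,\de)$. Then Corollary~\ref{coro-geq0} (whose proof is where the Laplace transform and Harnack inequality appear, applied to an auxiliary problem on a subdomain with boundary data inherited from the solution) produces a time $t_{x_0}\in(0,\de)$ with $v(x_0,t_{x_0})>0$, contradicting the overposed-data equality on $(0,T)$. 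The mismatch $g\ne\wt g$ is handled because Corollary~\ref{coro-geq0} requires only the differential inequality in $\Om$, not a boundary condition for $v$.
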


\begin{rem}
In assertion (b), we can change the observation point $x_0$ to the interior point and get the same uniqueness result by a similar argument.
\end{rem}

\begin{rem}
The assumptions $\mathcal G_1$ and $\mathcal G_2$ for the boundary conditions seem unnatural and unreasonable at first glance. Actually we can relax the regularity assumption on the boundary condition $g$, and the compact support condition is also not essential for our problem but we do not discuss here. Most importantly, one should remember that the inverse problem is usually designed as an experiment where the boundary condition $g$ should be regarded as an input, and one can choose the input arbitrarily in a large enough function space. 
\end{rem}

\bigskip
  The rest of this paper is organized as follows.
  Section \ref{sec-fp} is devoted to the wellposedness of the IBVPs 
\eqref{equ-u-distri}.
  In Section \ref{sec-proof}, preparing all necessities about the solutions of 
\eqref{equ-u-distri}, say, the wellposedness and 
Harnack's inequality, we finish the proof of Theorem \ref{thm-IP}.
  Finally, concluding remarks are given in Section \ref{sec-rem}.

\section{Forward problems}
\label{sec-fp}

As is known, most of the solvability of inverse problems is very dependent 
on forward problems no matter whether it is the pure theory or numerical 
theory of the inverse problems.
  In this section, we will consider the wellposedness of the IBVPs 
\eqref{equ-u-distri}, which mainly assert the 
continuity of the solutions so that enables the measurement of solutions make 
sense at one point $x_0$. 

\subsection{Dirichlet boundary condition}
\begin{prop}
\label{thm-fp}
Let $g\in C_0^\infty((0,T);H^{\f72}(\pa\Om))$.
We assume the weight function $\mu\in C[0,1]$ is nonnegative, and not vanish 
in $[0,1]$.
  Then the problem \eqref{equ-u-distri} with Dirichlet condition $u=g$ on the boundary $\partial\Omega\times(0,T)$ admits a unique solution 
$u\in C^\infty((0,T);H^4(\Om))$, satisfying
$$
	\|u\|_{C^m([0,T];H^4(\Om))} 
\le CT\max\{1,T\}\|g\|_{C^{m+2}([0,T];H^{\f72}(\pa\Om))},\quad m=0,1,\cdots.
$$
Here the constant $C>0$ only depends on $m,\mu,d,\Om$.
\end{prop}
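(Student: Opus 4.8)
The plan is to reduce the inhomogeneous Dirichlet problem to a homogeneous one by subtracting a suitable lift of the boundary data, then to solve the resulting problem by eigenfunction expansion against the eigensystem of the elliptic operator $A := -\De + p(x)$ with domain $H^2(\Om)\cap H_0^1(\Om)$, and finally to exploit the smoothness and compact support of $g$ in time together with decay estimates for the relevant Mittag-Leffler-type kernels in order to obtain the claimed $C^m([0,T];H^4(\Om))$ bound. Concretely, first I would choose a lift: since $g\in C_0^\infty((0,T);H^{7/2}(\pa\Om))$, the standard trace theory (Dirichlet lifting for the Laplacian, or the right-inverse of the trace operator) gives $w(\cdot,t)\in H^4(\Om)$ with $w|_{\pa\Om}=g(\cdot,t)$, depending smoothly on $t$, and $\|w\|_{C^{m}([0,T];H^4(\Om))}\le C\|g\|_{C^{m}([0,T];H^{7/2}(\pa\Om))}$. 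Then $v := u - w$ should satisfy $\D_t^{(\mu)}v + Av = F$ in $\Om\times(0,T)$ with $v|_{\pa\Om}=0$, $v|_{t=0}=0$, where $F = -\D_t^{(\mu)}w + \De w - p w$; note $F\in C^\infty((0,T); H^2(\Om))$ and, because $g$ is compactly supported in time, $F$ vanishes near $t=0$, so that $\D_t^{(\mu)}w$ is well-defined classically and $F(\cdot,0)=0$ along with all its $t$-derivatives at $0$.

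Next I would write $v(x,t)=\sum_{n=1}^\infty v_n(t)\vp_n(x)$, where $(\la_n,\vp_n)$ is the eigensystem of $A$ ($0<\la_1\le\la_2\le\cdots\to\infty$, $\{\vp_n\}$ an orthonormal basis of $L^2(\Om)$), reducing the PDE to the scalar distributed-order fractional ODEs $\D_t^{(\mu)}v_n + \la_n v_n = F_n(t)$, $v_n(0)=0$, with $F_n=(F,\vp_n)_{L^2(\Om)}$. This is exactly the setting where one applies the Laplace transform: $\wh v_n(s) = \wh F_n(s)/(\Phi_\mu(s)+\la_n)$ with $\Phi_\mu(s):=\int_0^1 s^\al\mu(\al)\,\d\al$, which by inversion gives $v_n(t)=\int_0^t k_n(t-\tau)F_n(\tau)\,\d\tau$ for a kernel $k_n$ whose decay in $t$ and whose dependence on $\la_n$ must be controlled. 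The key analytic input is a bound of the form $|k_n(t)|\le C/(1+\la_n)$ uniformly in $t\in[0,T]$ (and similarly for its $t$-derivatives, paying extra powers of $t^{-1}$ that are harmless once integrated against the smooth, compactly supported $F_n$), which one gets by deforming the Bromwich contour and using that $\Re\,\Phi_\mu(s)\gtrsim |s|^{\al_0}$-type lower bounds hold on suitable sectors because $\mu\ge,\not\equiv 0$. Combining $|v_n(t)|\le C(1+\la_n)^{-1}\int_0^T|F_n(\tau)|\,\d\tau$ with the regularity $F\in C^\infty((0,T);H^2(\Om))$ — which gives $\sum_n \la_n^2|F_n(\tau)|^2 = \|F(\cdot,\tau)\|_{H^2}^2$-type control via $A$-powers, hence $\sum_n \la_n|F_n|<\infty$ after a Cauchy--Schwarz split using $\sum_n\la_n^{-2}<\infty$ in $d\le 3$ — one obtains $\sum_n \la_n^2|v_n(t)|^2<\infty$, i.e. $v(\cdot,t)\in \cD(A^2)\subset H^4(\Om)$, with the stated norm bound; differentiating in $t$ under the sum (justified by the kernel estimates plus $F\in C^\infty$ and its flatness at $t=0$) upgrades this to $C^m([0,T];H^4(\Om))$ for every $m$. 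Adding back $w$ gives $u$ with the asserted estimate, and uniqueness follows because the difference of two solutions solves the homogeneous problem, whose Laplace-transformed components all vanish.

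The main obstacle I expect is the contour-integral estimate for the kernels $k_n$ and their time-derivatives, i.e. establishing that $\Phi_\mu(s)+\la_n$ stays away from zero and grows appropriately on a Hankel-type contour so that the inversion integral converges and yields the clean $(1+\la_n)^{-1}$ decay \emph{uniformly} in $n$ and locally uniformly in $t>0$; this requires a careful lower bound on $|\Phi_\mu(s)|$ and on $\arg\Phi_\mu(s)$ for $s$ in a sector around the positive real axis, which is where the positivity and non-triviality of $\mu$ enter (guaranteeing $\Phi_\mu$ maps such a sector into a sector strictly inside the right half-plane). A secondary technical point is justifying differentiation of the eigenfunction series term-by-term up to arbitrary order $m$ in $t$ while landing in $H^4(\Om)$; this is handled by the compact support of $g$ in $(0,T)$, which makes $F$ and all its $t$-derivatives vanish at $t=0$, so the Duhamel integrals $\int_0^t k_n^{(j)}(t-\tau)F_n(\tau)\,\d\tau$ behave well and the only $t$-singularities of the kernels are integrated against smooth data.
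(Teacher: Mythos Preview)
Your overall plan (subtract a lift of $g$, solve the resulting homogeneous-Dirichlet problem by eigenfunction expansion, invert via Laplace transform) is exactly the paper's route, and the paper in fact outsources the kernel estimates you describe to Corollary~3.1 of \cite{RZ16} rather than redoing the contour argument. The real difference, and the place where your proposal breaks, is the choice of lift. The paper does \emph{not} take a generic right inverse of the trace; it takes the harmonic (more precisely, $(-\De+p)$--harmonic) extension $\La g$, defined so that $(-\De+p)\La g=0$ in $\Om$ and $\La g=g$ on $\pa\Om$. This choice kills the spatial part, so the source for the reduced problem is exactly $F=-\D_t^{(\mu)}\La g$, which inherits the full $H^4(\Om)$ regularity of $\La g$ coming from $g\in H^{7/2}(\pa\Om)$. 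With your generic lift $w\in H^4(\Om)$, the source acquires the extra piece $\De w - pw$, which is only in $H^2(\Om)$; you have given away two derivatives before the eigenfunction argument even starts.

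Independently of that loss, the step ``$F\in H^2(\Om)$ gives $\sum_n\la_n^2|F_n|^2=\|F\|_{H^2}^2$-type control via $A$-powers'' is not valid. The equivalence $\sum_n\la_n^{2k}|F_n|^2\sim\|F\|_{H^{2k}}^2$ holds only on $\cD(A^k)$, which carries boundary conditions: already for $k=1$ one needs $F|_{\pa\Om}=0$. Your $F$ does not vanish on $\pa\Om$ (its trace involves $\D_t^{(\mu)}g$, $pg$, and $\De w|_{\pa\Om}$), so from $F\in H^2(\Om)$ alone you get nothing better than $\sum_n|F_n|^2<\infty$. Feeding that into $|v_n|\le C(1+\la_n)^{-1}\!\int_0^T|F_n|$ only yields $\sum_n\la_n^2|v_n|^2<\infty$, i.e.\ $v\in\cD(A)\subset H^2(\Om)$, not $\cD(A^2)\subset H^4(\Om)$ as you write (note also the index slip: $\cD(A^2)$ corresponds to $\sum_n\la_n^4|v_n|^2<\infty$). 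The fix is to use the harmonic lift $\La g$ as the paper does; then the whole spatial-regularity bookkeeping can be done in $H^4$ directly, and the remaining work is the elementary time estimate $\|\pa_t^k\D_t^{(\mu)}\La g\|_{H^4}\le C\max\{1,T\}\|g\|_{C^{k+1}([0,T];H^{7/2}(\pa\Om))}$, which is where the factor $T\max\{1,T\}$ in the stated bound actually comes from.
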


Before giving the proof of the above lemma, we first  introduce the Dirichlet eigensystem $\{\la_n,\vp_n\}$ of the operator $-\De + p(x)$, that is,
$$
(-\De+p(x))\vp_n=\la_n\vp_n,\quad \vp_n\in H_0^1(\Om)\cap \HH.
$$ 
For short we denote $w(s):=\int_0^1 \mu(\alpha) s^{\alpha-1} \d\alpha$ and we define an operator $I^{(\mu)}$ as
$$
I^{(\mu)} \phi(t):=\int_0^t \kappa(t-\tau) \phi(\tau)\d \tau,\quad \mbox{where } \kappa(t):=\f1{2\pi\i} \int_{\gamma-\i\infty}^{\gamma+\i\infty} \frac{1}{sw(s)}\e^{st}\d s.
$$
We now turn to considering the following IBVP
\begin{equation}
\label{eq-v}
\left\{
\begin{alignedat}{2}
&\D_t^{(\mu)} v - \De v + p(x)v = F &\quad& \mbox{in $\Om\times(0,T)$,}\\
& v|_{t=0}=0 &\quad& \mbox{in $\Om$,}\\
& v=0 &\quad& \mbox{on $\pa\Om\times(0,T)$,}
\end{alignedat}
\right.
\end{equation}
  where $F\in C^1_0((0,T);\HH)$. With reference to Corollary 3.1 in 
\cite{RZ16} and the above notations, the solution $v$ to \eqref{eq-v} can be 
represented in the form
\begin{equation}
\label{sol-v1}
v(x,t)=\sum_{n=1}^\infty \int_0^t (\pa_t  F(\cdot,\tau),\vp_n) I^{(\mu)}v_n(t-\tau)\d\tau \vp_n(x),\quad (x,t)\in \Om\times(0,T),
\end{equation}
where $(\cdot,\cdot)$ denotes the inner product in $\LL$, and $v_n(t)$ is the unique solution of the distributed ordinary differential equation
$$
\left\{
\begin{alignedat}{2}
& \D_t^{(\mu)} v_n(t) = -\la_n v_n(t), &\quad& t>0,\\
& v_n(0)=1.
\end{alignedat}
\right.
$$

  Armed with the above argument, in what following, we will give the formal 
representation of the solution to the IBVP \eqref{equ-u-distri} and give the 
proof of the conclusions stated in Proposition \ref{thm-fp}.
  For this, we introduce the operator $\La: L^2(\pa\Om)\to H^{\f12}(\Om)$ by
$$
	\La \phi(x):= 
	-\sum_{n=1}^\infty \f1{\la_n}(\phi,\pa_\nu\vp_n)_{L^2(\pa\Om)}\vp_n(x),
	\quad \phi\in L^2(\pa\Om).
$$
  It is not very difficult to see that $\La g$ solves the following 
boundary value problem for the elliptic equation
\begin{equation}
\label{equ-g}
\left\{
\begin{alignedat}{2}
& (-\De+p(x))(\La g) =0 &\quad& \mbox{in $\Om$,} \\
& \La g = g &\quad& \mbox{on $\pa\Om$.}
\end{alignedat}
\right.
\end{equation}
in view of Lemma 2.1 in \cite{F14}, or one can prove directly by integration by parts. 

As a byproduct of the regularity estimate for above boundary value problem \eqref{equ-g} (see, e.g., \cite{LM12}) and the assumption $g\in C^\infty_0((0,T);H^{\f72}(\pa\Om))$, we find that $\La g\in C_0^\infty((0,T);H^4(\Om))$ and there exists a positive constant $C$ which is independent of $t$, $T$ and $g$ such that the following estimate
\begin{equation}
\label{esti-g}
\|\pa_t^i \La g(t)\|_{H^4(\Om)} \le C\|\pa_t^i g(t)\|_{H^{\f72}(\pa\Om)},\quad t\in(0,T),\ i=0,1,\cdots
\end{equation}
holds true.

Now letting $w(x,t):=u(x,t) - (\La g)(x,t)$, we see that $w$ reads
$$
\left\{
\begin{alignedat}{2}
& \D_t^{(\mu)} w - \De w + p(x) w = -\D_t^{(\mu)} (\La g) &\quad& \mbox{in $\Om\times(0,T)$,} \\
& w|_{t=0} = u|_{t=0} - \La g|_{t=0} = 0 &\quad& \mbox{on $\Om$,}\\
& w = u - \La g = 0 &\quad& \mbox{on $\pa\Om\times(0,T)$.}
\end{alignedat}
\right.
$$
Consequently, with reference to \eqref{sol-v1}, we obtain
$$
w(x,t)=-\sum_{n=1}^\infty \int_0^t ( \pa_t \D_t^{(\mu)}\La g(\cdot,\tau),\vp_n) I^{(\mu)}v_n(t-\tau)\d\tau \vp_n,\quad t\in(0,T),
$$
and hence
\begin{equation}
\label{sol-u}
u(x,t)=(\La g)(x,t)-\sum_{n=1}^\infty \int_0^t ( \pa_t \D_t^{(\mu)}\La g(\cdot,\tau),\vp_n) I^{(\mu)}v_n(t-\tau)\d\tau \vp_n,\quad t\in(0,T).
\end{equation}
We are now ready to give the proof of Proposition \ref{thm-fp}.
\begin{proof}[\bf Proof of Proposition \ref{thm-fp}]
We shall treat each of each term on the right-hand side of \eqref{sol-u} separately. Firstly, by arguing as in the derivation of Corollary 3.1 in \cite{RZ16}, we find that $w\in C^\infty((0,\infty);\HH)$ and the following inequality
\begin{align*}
\|w\|_{C^m([0,T];H^4(\Om))} 
\le C\sum_{i=1}^{m+1}\|\pa_t^i \D_t^{(\mu)}\La g\|_{L^2(0,T;H^4(\Om))},
\quad m=0,1,\cdots
\end{align*}
is valid, which gives an evaluation for $u$: 
\begin{align*}
\|u\|_{C^m([0,T];H^4(\Om))} 
\le& C\sum_{i=0}^{m+1} \|\pa_t^i \La g\|_{L^\infty(0,T;H^4(\Om))}
+C\sum_{i=1}^{m+1}\|\pa_t^i \D_t^{(\mu)}\La g\|_{L^2(0,T;H^4(\Om))}\\
\le& C\sum_{i=0}^{m+1} \|\pa_t^i g\|_{L^\infty(0,T;H^{\f72}(\pa\Om))}
+C\sum_{i=1}^{m+1}\|\pa_t^i \D_t^{(\mu)}\La g\|_{L^2(0,T;H^4(\Om))}
\end{align*}
upon applying the above estimate \eqref{esti-g} for $\La g$.

In light of the above inequalities, it is enough to evaluate $\|\pa_t^i \D_t^{(\mu)}\La g\|_{L^2(0,T;H^4(\Om))}$ for $i=1,2,\cdots,m+1$. We start with $\pa_t \D_t^{(\mu)}\La g$. For this, as a preamble, from the definition of the distributed order time fractional derivative $\D_t^{(\mu)}$, it follows that
$$
\pa_t \D_t^{(\mu)}\La g
=\int_0^1 \f{\mu(\alpha)}{\Ga(1-\alpha)} \left[\pa_t \int_0^t \f{\pa_t\La g(\cdot,\tau)}{(t-\tau)^{\alpha} }\d\tau\right]\d\alpha
=\int_0^1 \f{\mu(\alpha)}{\Ga(1-\alpha)} \left[\int_0^t \f{\pa_t^2 \La g(\cdot,\tau)}{(t-\tau)^{\alpha} } \d\tau\right]\d\alpha,
$$
where in the last equality we used the assumption that $g\in C_0^\infty((0,T);H^{\f72}(\pa\Om))$, and hence
\begin{align*}
\|\pa_t \D_t^{(\mu)}\La g\|_{H^4(\Om)}
\le& \|\mu\|_{C[0,1]}\|\pa_t^2 \La g\|_{C([0,T];H^4(\Om))} \int_0^1 \frac1{\Ga(1-\alpha)}\left[\int_0^t \tau^{-\alpha} \d\tau\right]\d\alpha
\\
\le& \|\mu\|_{C[0,1]}\|\pa_t^2 g\|_{C([0,T];H^{\f72}(\pa\Om))} \int_0^1 \frac1{\Ga(2-\alpha)}t^{1-\alpha} \d\alpha,\quad t\in(0,T).
\end{align*}
Here in the last equality, we used the estimate \eqref{esti-g} and the property of the Gamma function $\Ga(1+\beta) = \beta\Ga(\beta)$, $\beta>0$. Now from the continuity of the Gamma function on the interval $[1,2]$, we finally derive that
\begin{align*}
\|\pa_t \D_t^{(\mu)}\La g\|_{H^4(\Om)}
\le \|1/{\Ga}\|_{L^\infty(1,2)} \|\mu\|_{C[0,1]}\|g\|_{C^2([0,T];H^{\f72}(\pa\Om))} \max\{T,1\},\quad t\in(0,T).
\end{align*}
Similarly, for $k=2,\cdots,m+1$, we have
\begin{align*}
\|\pa_t^k \D_t^{(\mu)}\La g\|_{H^4(\Om)}
\le \|1/{\Ga}\|_{L^\infty(1,2)} \|\mu\|_{C[0,1]}\|g\|_{C^{k+1}([0,T];H^{\f72}(\pa\Om))} \max\{T,1\},\quad t\in(0,T).
\end{align*}
Collecting all the above estimates leads to
\begin{align*}
\|u\|_{C^m([0,T];H^4(\Om))} 
\le CT\max\{1,T\}\|g\|_{C^{m+2}([0,T];H^{\f72}(\pa\Om))},
\end{align*}
where the constant $C$ is independent of $g$ and $T$ but may depend on $m,\mu,d,\Om$.
\end{proof}

\subsection{Neumann boundary condition}
  Next we consider the forward problem for IBVP \eqref{equ-u-distri} with Neumann boundary condition $\partial_\nu u=g$ on $\Omega$.
  We assert the following result of regularity and estimate.

\bigskip
\begin{prop}\label{thm-fp2}
  Let $g\in C_0^\infty((0,T);H^{\f52}(\pa\Om))$.
  We assume the weight function $\mu\in C[0,1]$ is nonnegative, and not vanish 
in $[0,1]$.
  Then the problem \eqref{equ-u-distri} with Neumann boundary condition admits a unique solution 
$u\in C^\infty((0,T);H^4(\Om))$, satisfying
$$
	\|u\|_{C^m([0,T];H^4(\Om))} 
\le CT\max\{1,T\}\|g\|_{C^{m+2}([0,T];H^{\f52}(\pa\Om))},\quad m=0,1,\cdots.
$$
  Here the constant $C>0$ only depends on $m,\mu,d,\Om$.
\end{prop}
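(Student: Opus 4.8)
The plan is to repeat the proof of Proposition \ref{thm-fp} essentially verbatim, changing only the boundary-data lifting operator --- from an extension matching the Dirichlet data for $-\De+p$ to one matching the Neumann data --- and the order of the boundary trace involved. First I would introduce the \emph{Neumann} eigensystem $\{\la_n,\vp_n\}$ of $-\De+p(x)$, i.e.\ $(-\De+p(x))\vp_n=\la_n\vp_n$ in $\Om$ with $\pa_\nu\vp_n=0$ on $\pa\Om$ and $\{\vp_n\}$ an orthonormal basis of $\LL$. Under the standing hypothesis $p\ge0$, supplemented by $p\not\equiv0$ (in particular this holds if $p\ge c_0>0$ as in Theorem \ref{thm-IP}(b)) so that $0$ is not a Neumann eigenvalue, one has $0<\la_1\le\la_2\le\cdots\to\infty$, and I would let $\La g$ be the unique solution of the elliptic Neumann problem $(-\De+p(x))(\La g)=0$ in $\Om$, $\pa_\nu(\La g)=g$ on $\pa\Om$; equivalently, by Green's identity (cf.\ Lemma 2.1 in \cite{F14}),
\[
\La g=-\sum_{n=1}^\infty\f1{\la_n}(g,\vp_n)_{L^2(\pa\Om)}\vp_n.
\]

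Next I would invoke elliptic regularity for the Neumann problem (\cite{LM12}): the normal-derivative trace lowers Sobolev order by $3/2$ (against $1/2$ for the Dirichlet trace), hence maps $H^4(\Om)$ onto $H^{\f52}(\pa\Om)$, which is exactly why $H^{\f52}(\pa\Om)$ here plays the role that $H^{\f72}(\pa\Om)$ played in Proposition \ref{thm-fp}. Consequently $g\in C_0^\infty((0,T);H^{\f52}(\pa\Om))$ gives $\La g\in C_0^\infty((0,T);H^4(\Om))$ together with a $t,T,g$-uniform bound $\|\pa_t^i\La g(t)\|_{H^4(\Om)}\le C\|\pa_t^i g(t)\|_{H^{\f52}(\pa\Om)}$ for $t\in(0,T)$, $i=0,1,\dots$, the exact analogue of \eqref{esti-g}. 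Putting $w:=u-\La g$, one finds that $w$ solves $\D_t^{(\mu)}w-\De w+pw=-\D_t^{(\mu)}(\La g)$ in $\Om\times(0,T)$ with $w|_{t=0}=0$ and $\pa_\nu w=0$ on $\pa\Om\times(0,T)$; since $g\in C_0^\infty$, the right-hand side $F:=-\D_t^{(\mu)}(\La g)$ satisfies $F|_{t=0}=0$ and $F\in C^1((0,T);\HH)$, so the Neumann analogue of the representation \eqref{sol-v1} --- established by the same separation-of-variables argument as Corollary 3.1 in \cite{RZ16}, now with the Neumann eigenfunctions $\vp_n$ --- applies and yields
\[
u(x,t)=(\La g)(x,t)-\sum_{n=1}^\infty\int_0^t(\pa_t\D_t^{(\mu)}\La g(\cdot,\tau),\vp_n)\,I^{(\mu)}v_n(t-\tau)\,\d\tau\,\vp_n(x),\quad t\in(0,T).
\]

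From here the argument coincides with that of Proposition \ref{thm-fp}. The term $\La g$ is controlled by the lifting bound above. For the series term, the Neumann counterpart of Corollary 3.1 in \cite{RZ16} gives $\|w\|_{C^m([0,T];H^4(\Om))}\le C\sum_{i=1}^{m+1}\|\pa_t^i\D_t^{(\mu)}\La g\|_{L^2(0,T;H^4(\Om))}$, and for each $k=1,\dots,m+1$ the compact support of $g$ in $(0,T)$ justifies the identity $\pa_t^k\D_t^{(\mu)}\La g=\int_0^1\f{\mu(\al)}{\Ga(1-\al)}\int_0^t(t-\tau)^{-\al}\pa_\tau^{k+1}\La g(\cdot,\tau)\,\d\tau\,\da$; then, taking $H^4(\Om)$-norms, using $\Ga(2-\al)=(1-\al)\Ga(1-\al)$ together with $t^{1-\al}\le\max\{T,1\}$ and $\int_0^1\Ga(2-\al)^{-1}\da\le\|1/\Ga\|_{L^\infty(1,2)}$, and inserting the lifting bound, I get $\|\pa_t^k\D_t^{(\mu)}\La g\|_{H^4(\Om)}\le\|1/\Ga\|_{L^\infty(1,2)}\|\mu\|_{C[0,1]}\|g\|_{C^{k+1}([0,T];H^{\f52}(\pa\Om))}\max\{T,1\}$. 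Collecting these estimates, and keeping track of the powers of $T$ coming from the time integrations, yields $\|u\|_{C^m([0,T];H^4(\Om))}\le CT\max\{1,T\}\|g\|_{C^{m+2}([0,T];H^{\f52}(\pa\Om))}$ with $C$ depending only on $m,\mu,d,\Om$, and $u=\La g+w\in C^\infty((0,T);H^4(\Om))$. Uniqueness follows from linearity of \eqref{equ-u-distri}: if $g\equiv0$ then $\La g\equiv0$, and the homogeneous representation forces $w\equiv0$.

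The hard part is really only the first step: making the Neumann lifting $\La$ legitimate --- i.e.\ ensuring $0$ is not a Neumann eigenvalue, which is where $p\not\equiv0$ (or $p\ge c_0>0$) is used --- and quoting the Neumann elliptic regularity estimate. Once $\La$ and its $H^{\f52}(\pa\Om)\to H^4(\Om)$ bound are available, everything else is a line-by-line transcription of the proof of Proposition \ref{thm-fp} with $H^{\f72}(\pa\Om)$ replaced throughout by $H^{\f52}(\pa\Om)$.
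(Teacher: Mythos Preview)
Your proposal is correct and follows essentially the same route as the paper: the paper introduces the Neumann realization $A$ of $-\De+p$, the Neumann lifting $\ov\La$ solving \eqref{equ-g2}, and the representation \eqref{sol-u2}, and then simply says ``Moreover we can also have estimate stated in Proposition \ref{thm-fp2}'', i.e.\ repeat the proof of Proposition \ref{thm-fp} verbatim with $\ov\La$ in place of $\La$ and $H^{5/2}(\pa\Om)$ in place of $H^{7/2}(\pa\Om)$. Your write-up is in fact more explicit than the paper's, correctly identifying both why the boundary Sobolev index drops by one (the Neumann trace loses $3/2$ derivatives instead of $1/2$) and why one needs $p\not\equiv0$ (equivalently $p\ge c_0>0$, as assumed in Theorem \ref{thm-IP}(b)) to avoid the zero Neumann eigenvalue when defining $\ov\La$.
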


\bigskip
  In order to deal with the problem with Neumann boundary condition, we 
prepare the operator $A$ on $L^2(\Om)$ defined as
\[
\begin{cases}
	D(A)=\{u\in H^2(\Om);\ \pa_{\nu}u=0\}, \\
	Au=-\De u+p(x)u,\quad u\in D(A).
\end{cases}
\]
  Then $A$ has the eigensystem $\{\mu_{n}, \psi_{n}\}$ and we can define the 
operator valued function $\{S_{A}(t)\}_{t\ge0}$ by
\[
	S_{A}(t)u=\sum_{n=1}^{\infty}(u,\psi_n)I^{(\mu)}v_n(t)\psi_n, \quad
	u\in L^2(\Om),
\]
  where $v_n(t)$ is the unique solution of the distributed ordinary 
differential equation
\[
\left\{\begin{alignedat}{2}
	& \D_t^{(\mu)} v_n(t) + \mu_n v_n(t)=0, \quad t>0,\\
	& v_n(0)=1.
\end{alignedat}\right.
\]
  We consider the following abstract problem in $L^2(\Om)$;
\begin{equation}\label{eq-Op}
\left\{\begin{alignedat}{2}
	&\D_t^{(\mu)} u(t) + Au(t) = F(t), \quad t\in(0,T), \\
	&u(0)=0, 
\end{alignedat}\right.
\end{equation}
  where $F$ is given and such that $F(0)=0$.
  We note that \eqref{eq-Op} admits a unique solution represented by
\begin{equation}\label{sol-Op}
	u(t)=\int_{0}^{t}S_{A}(t-\tau)F'(\tau)\d\tau, \quad t\in(0,T).
\end{equation}
  By repeating the argument for \eqref{equ-u-distri}, we obtain the 
representation formula of the solution to \eqref{equ-u-distri} as follows;
\begin{align}\label{sol-u2}
	u(x,t)&=(\ov{\La}g)(x,t)
	-\int_{0}^{t}S_{A}(t-\tau)(\D_t^{(\mu)}\ov{\La}g)'(\tau)\d\tau 
		\nonumber\\
	&=(\ov{\La}g)(x,t)-\sum_{n=1}^\infty \int_0^t (\pa_t \D_t^{(\mu)}\ov{\La}g(\cdot,\tau),\psi_n) I^{(\mu)}v_n(t-\tau)\d\tau \psi_n,\quad t\in(0,T),
\end{align}
  where the operator $\ov{\La}:L^2(\pa\Om)\to H^{3/2}(\Om)$ maps $g$ to the 
solution of the following boundary value problem of elliptic equation;
\begin{equation}\label{equ-g2}
\left\{\begin{alignedat}{2}
	& (-\De+p(x))(\ov{\La} g) =0 &\quad& \mbox{in $\Om$,} \\
	& \pa_{\nu}(\ov{\La} g) = g &\quad& \mbox{on $\pa\Om$.}
\end{alignedat}\right.
\end{equation}
  Moreover we can also have estimate stated in Proposition \ref{thm-fp2}.

\bigskip
  We finish this section by showing the following maximum principle for 
Neumann boundary value problem;

\bigskip
\begin{prop}\label{prop:Neumann}
  Let $u\in C^{2,1}(\overline{\Om}\times[0,T])$ satisfy the differential 
inequality below:
\begin{eqnarray}
	\D_{t}^{(\mu)}u-\De u+p(x)u\ge0
		&\mbox{in}&\ \Om\times(0,T), \label{eq:Inq} \\
	u|_{t=0}\ge 0
		&\mbox{in}&\ \Om, \label{eq:IC} \\
	\pa_{\nu}u\ge 0
		&\mbox{on}&\ \pa\Om\times(0,T), \label{eq:BC}
\end{eqnarray}
  Then we see that $u\ge 0$ in $\overline{\Om}\times[0,T]$.
\end{prop}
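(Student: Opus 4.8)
The plan is to argue by contradiction, after adding a small perturbation so that \eqref{eq:Inq} becomes a \emph{strict} inequality, by combining the extremum principle for the Caputo fractional derivative in the temporal variable with Hopf's boundary point lemma for the elliptic operator $-\De+p$ in the spatial variable. Since $\mu\in C[0,1]$ is nonnegative and not identically zero, a direct computation gives $\D_t^{(\mu)}(1+t)=\int_0^1 \f{t^{1-\al}}{\Ga(2-\al)}\mu(\al)\,\d\al$, which vanishes at $t=0$ and is strictly positive for $t>0$. Hence, for each $\ve>0$, the function $u_\ve:=u+\ve(1+t)$ satisfies $\D_t^{(\mu)}u_\ve-\De u_\ve+p\,u_\ve>0$ in $\Om\times(0,T)$ (and, by continuity, also at $t=T$), together with $u_\ve|_{t=0}\ge\ve>0$ in $\Om$ and $\pa_\nu u_\ve=\pa_\nu u\ge0$ on $\pa\Om\times(0,T)$, the last equality holding because the perturbation does not depend on $x$. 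It therefore suffices to prove $u_\ve\ge0$ in $\ov\Om\times[0,T]$ for every $\ve>0$ and then let $\ve\to0^+$.

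Since $u_\ve\in C^{2,1}(\ov\Om\times[0,T])$, it attains its minimum over $\ov\Om\times[0,T]$ at some $(x_0,t_0)$; assume for contradiction that $u_\ve(x_0,t_0)<0$. The initial condition \eqref{eq:IC} rules out $t_0=0$, so $t_0\in(0,T]$. Because $t\mapsto u_\ve(x_0,t)$ attains its minimum on $[0,T]$ at $t_0$, the extremum principle for the Caputo derivative (in $t$) gives, for $\al\in(0,1)$,
\[
\pa_t^\al u_\ve(x_0,t_0)\;\le\;\f{t_0^{-\al}}{\Ga(1-\al)}\bigl(u_\ve(x_0,t_0)-u_\ve(x_0,0)\bigr)\;\le\;0,
\]
and, since the cases $\al=0,1$ are immediate, integrating against $\mu(\al)\,\d\al$ yields $\D_t^{(\mu)}u_\ve(x_0,t_0)\le0$. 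Combined with $p(x_0)u_\ve(x_0,t_0)\le0$ and the strict inequality at $(x_0,t_0)$, this forces $-\De u_\ve(x_0,t_0)>0$. Hence $x_0$ cannot be an interior point of $\Om$ (where the second-derivative test would force $\De u_\ve(x_0,t_0)\ge0$), so $x_0\in\pa\Om$; and the same reasoning applies to \emph{any} minimizer of $u_\ve$ with negative value, so all of them lie in $\pa\Om\times(0,T]$.

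It remains to exclude a boundary minimizer, which is where the strong maximum principle enters. On the time slice $t=t_0$, put $\tilde v:=u_\ve(\cdot,t_0)-u_\ve(x_0,t_0)\in C^2(\ov\Om)$: then $\tilde v\ge0$ on $\ov\Om$, $\tilde v(x_0)=0$, and $\tilde v>0$ throughout $\Om$ because all minimizers of $u_\ve$ lie on $\pa\Om$. Subtracting the constant $u_\ve(x_0,t_0)$ makes the zeroth-order term $p\,\tilde v\ge0$, while $-\De\tilde v(x_0)=-\De u_\ve(x_0,t_0)>0$; by continuity of $\De u_\ve(\cdot,t_0)$ it follows that $-\De\tilde v+p\,\tilde v>0$ on a neighbourhood of $x_0$. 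Using the $C^2$ regularity of $\pa\Om$, choose an interior ball $B\subset\Om$ with $\ov B\cap\pa\Om=\{x_0\}$ that is small enough to lie in that neighbourhood. Then $\tilde v$ is a strict supersolution of the uniformly elliptic operator $-\De+p$ in $B$ (with bounded, nonpositive zeroth-order coefficient $-p$), is positive in $B$, and vanishes at $x_0$, so Hopf's boundary point lemma gives $\pa_\nu\tilde v(x_0)<0$. But $\pa_\nu\tilde v(x_0)=\pa_\nu u_\ve(x_0,t_0)=\pa_\nu u(x_0,t_0)\ge0$ by \eqref{eq:BC}, a contradiction. Therefore $u_\ve\ge0$ in $\ov\Om\times[0,T]$, and letting $\ve\to0^+$ completes the proof.

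I expect the boundary case $x_0\in\pa\Om$ to be the only real difficulty: the temporal and interior-spatial parts above are the standard (fractional) weak maximum principle, but at a boundary minimum the Neumann condition forces only $\pa_\nu u_\ve(x_0,t_0)=0$, so one genuinely needs Hopf's lemma, i.e.\ the \emph{strong} maximum principle of elliptic type. This is precisely why the perturbation $\ve(1+t)$ is chosen — it keeps \eqref{eq:Inq} strict, so that the shifted time slice $\tilde v$ is a strict elliptic supersolution near $x_0$, which is all Hopf's lemma needs (the sign of the zeroth-order coefficient $-p\le0$ being harmless since $\tilde v(x_0)=0$). Two routine technical points to check along the way are that $\D_t^{(\mu)}u$ is continuous up to $t=T$ (so the inequalities are stable there) and that the pointwise Caputo extremum inequality may be integrated against $\mu(\al)\,\d\al$; both follow easily from $u\in C^{2,1}(\ov\Om\times[0,T])$ and $\mu\in C[0,1]$.
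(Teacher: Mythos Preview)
Your proof is correct and follows essentially the same approach as the paper: perturb to a strict inequality, assume a negative minimum, combine the Caputo extremum principle in time with $p\ge0$ to rule out interior minima, and then invoke Hopf's boundary point lemma (the paper cites the same result, Lemma~3.4 in Gilbarg--Trudinger) to contradict the Neumann condition at a boundary minimizer. Your treatment of the boundary case is a slight streamlining---you apply Hopf directly to the shifted slice $\tilde v$ rather than first locating a nearby interior minimizer as the paper does---but the method is the same.
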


\bigskip
  This can be shown similarly to Lemma 2.1 in \cite{LLYZ}.
  For the proof, we prepare the following important lemma.

\bigskip
\begin{lem}[\cite{L-FCAA}]
  Let $f:[0,T]\to\mathbb{R}$ be a smooth function which attains its minimum at 
$t_{0}\in(0,T]$.
  Then
\[
	\pa_{t}^{\alpha}f(t_{0})\le 0, \quad 0<\alpha<1.
\]
\end{lem}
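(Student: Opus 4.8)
The plan is to evaluate the Caputo derivative at the minimizer through an integration by parts that makes its sign transparent. Concretely, I would first establish the pointwise representation
\[
\pa_t^\al f(t_0)=\f{1}{\Ga(1-\al)}\left(\f{f(t_0)-f(0)}{t_0^{\,\al}}+\al\int_0^{t_0}\f{f(t_0)-f(\tau)}{(t_0-\tau)^{1+\al}}\,\d\tau\right),
\]
valid whenever $f\in C^1$ and $t_0\in(0,T]$, and then read the inequality off directly. Essentially all the content of the lemma is packed into the validity of this identity.

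To derive it I would begin from the definition $\pa_t^\al f(t_0)=\f{1}{\Ga(1-\al)}\int_0^{t_0}(t_0-\tau)^{-\al}f'(\tau)\,\d\tau$ and integrate by parts, but against the primitive $\tau\mapsto f(\tau)-f(t_0)$ of $f'$ rather than against $f$ itself, since subtracting the constant $f(t_0)$ is exactly what neutralizes the endpoint $\tau=t_0$. The boundary term there is $\lim_{\tau\uparrow t_0}(t_0-\tau)^{-\al}\bigl(f(\tau)-f(t_0)\bigr)$, which vanishes because $f\in C^1$ forces $f(\tau)-f(t_0)=O(t_0-\tau)$ while $\al<1$; the boundary term at $\tau=0$ contributes $\bigl(f(t_0)-f(0)\bigr)t_0^{-\al}$; and $\f{\d}{\d\tau}(t_0-\tau)^{-\al}=\al(t_0-\tau)^{-1-\al}$ produces the remaining integral, which is absolutely convergent since its integrand is likewise $O\bigl((t_0-\tau)^{-\al}\bigr)$ near $\tau=t_0$. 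Collecting these pieces yields the displayed formula.

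The conclusion is then immediate: since $t_0$ minimizes $f$ over $[0,T]\supseteq[0,t_0]$, we have $f(t_0)-f(0)\le 0$ and $f(t_0)-f(\tau)\le 0$ for all $\tau\in(0,t_0)$, while $\Ga(1-\al)$, $t_0^{\,\al}$ and $(t_0-\tau)^{-1-\al}$ are all positive; hence both summands in the parenthesis are $\le 0$, and therefore $\pa_t^\al f(t_0)\le 0$. The one genuinely delicate point, and indeed essentially the only thing that needs to be checked, is the legitimacy of the integration by parts across the weak singularity at $\tau=t_0$, i.e.\ the vanishing of that boundary term and the absolute convergence of the regularized integral; both rest solely on $f$ being $C^1$ together with $0<\al<1$. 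I would also record, since it costs nothing and is the ingredient that upgrades weak maximum principles to strong ones, that the same representation shows $\pa_t^\al f(t_0)=0$ forces both nonpositive summands to vanish, whence $f\equiv f(t_0)$ on $[0,t_0]$.
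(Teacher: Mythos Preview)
Your argument is correct: the integration-by-parts identity
\[
\pa_t^\al f(t_0)=\f{1}{\Ga(1-\al)}\left(\f{f(t_0)-f(0)}{t_0^{\,\al}}+\al\int_0^{t_0}\f{f(t_0)-f(\tau)}{(t_0-\tau)^{1+\al}}\,\d\tau\right)
\]
is derived exactly as you describe, the boundary term at $\tau=t_0$ vanishes by the $C^1$ bound $f(\tau)-f(t_0)=O(t_0-\tau)$ with $\al<1$, and the sign conclusion is immediate. Your aside on the equality case is also correct and is indeed the mechanism behind the strong extremum principle.

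As for comparison with the paper: the paper does \emph{not} supply its own proof of this lemma. It is stated with a citation to Luchko \cite{L-FCAA} and used as a black box (only the consequence $\D_t^{(\mu)}f(t_0)\le 0$ is drawn from it). The proof you have written is precisely the original argument from that reference, so there is nothing to contrast---you have reproduced the intended proof rather than found an alternative one.
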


\bigskip
  From this lemma, we easily deduce
\begin{equation}\label{eq:min}
	\D_{t}^{(\mu)}f(t_{0})
	=\int_{0}^{1}\pa_{t}^{\alpha}f(t_{0})\mu(\alpha)d\alpha
	\le 0
\end{equation}
  by the positivity of $\mu:[0,1]\to\mathbb{R}$.

\bigskip
\begin{proof}[\bf Proof of Proposition \ref{prop:Neumann}]
  We first note that by taking $u+\ve t$ for $\ve>0$ and letting $\ve\to0$, we 
may consider the following differential inequality instead of \eqref{eq:Inq};
\begin{equation}\label{eq:Inq'}
	\D_{t}^{(\mu)}u-\De u+p(x)u>0 \quad\mbox{in}\ \Om\times(0,T).
\end{equation}
  We will prove by contradiction argument that the solution $u$ of IBVP 
\eqref{eq:Inq'}, \eqref{eq:IC} and \eqref{eq:BC} satisfy the same conclusion 
$u\ge 0$.
  To this end, we assume that $u(x,t)<0$ for some 
$(x,t)\in\overline{\Om}\times[0,T]$.
  Then, by its continuity, $u$ attains its minimum less than $0$ at some 
$(x_{0},t_{0})\in\overline{\Om}\times[0,T]$.
  That is,
\[
	\min_{(x,t)\in\overline{\Om}\times[0,T]}u(x,t)
	=u(x_{0},t_{0})<0.
\]
  Since $u(x,0)\ge 0$, we may further assume that
\[
	t_{0}>0.
\]
  Then we can show the existence of $x_{*}\in\Om$ satisfying
\begin{equation}\label{eq:Du}
	\De u(x_{*},t_{0})-p(x_{*})u(x_{*},t_{0})\ge 0
\end{equation}
  for the following two cases;
\[
	x_{0}\in\Om	\quad\mbox{and}\quad x_{0}\in\pa\Om
\]
  If $x_{0}\in\Om$, then \eqref{eq:Du} is obvious by 
taking $x_{*}=x_{0}$.
  If $x_{0}\in\pa\Om$, then we can prove 
\eqref{eq:Du} by contradiction argument as follows.
  Suppose $\De u(x_{0},t_{0})-p(x_{0})u(x_{0},t_{0})<0$.
  Then by the continuity of $u$ and $\De u$, there exists $\de>0$ such that
\[
	|x-x_{0}|<\de \implies \De u(x,t_{0})-p(x)u(x,t_{0})<0.
\]

  Now we set $\Om_{\de}:=\{x\in\Om;\ |x-x_{0}|<\de\}$.
  Then there exists $x_{1}\in\Om_{\de}$ such that
\begin{equation}\label{eq:x1}
	u(x_{1},t_{0})=u(x_{0},t_{0})
\end{equation}
  Indeed, suppose by contradiction that $u(x,t_{0})>u(x_{0},t_{0})$ holds 
for any $x\in\Om_{\de}$.
  Then by Lemma 3.4 in \cite{G-T}, we have $\pa_{\nu}u(x_{0},t_{0})<0$, 
which is impossible by boudary condition \eqref{eq:BC}.
  Thus we see that \eqref{eq:x1} is true.

  Since $(x_{1},t_{0})$ is an interior point of $\Om\times(0,T)$ and 
$u(x_{1},t_{0})$ is minimum and negative, we have
\[
	\De u(x_{1},t_{0})-p(x_{1})u(x_{1},t_{0})\ge 0
\]
  Therefore, by taking $x_{*}=x_{1}$, inequality \eqref{eq:Du} holds.

  Thus we have proved \eqref{eq:Du}.
  Combining this with \eqref{eq:min}, we deduce the contradiction;
\[
	0\le\De u(x_{*},t_{0})-p(x_{*})u(x_{*},t_{0})
	<\D_{t}^{(\mu)}u(x_{*},t_{0})\le 0.
\]
  Hence we must have $u\ge 0$.
\end{proof}

\section{Proof of Main Result}
\label{sec-proof}
On the basis of the above section for the forward problem, it reveals that the pointwise observation is possible. In this section, we turn to finishing the proof for our inverse problem stated in Theorem \ref{thm-IP} in the determination of the weight function $\mu$ from the point measurement. To this end, we first show a very useful Harnack type estimate for the elliptic equations with parameters.

\subsection{Harnack's inequality}

The following useful lemma holds:
\begin{lem}[Harnack's inequality]
\label{lem-har}
Assume $u\ge0$ is a $C^2$ solution of 
$$
-\De u + (\la+p(x)) u = 0 \mbox{ in $\Om$,}
$$
  where $\la>0$ is a constant.
  Consider a subdomain $U\subset\subset\Om$. Then 
\begin{equation}
\label{esti-har}
\sup_{U} u \le \e^{C(1+\la)}\inf_U u
\end{equation}
for some positive constant $C$ that depends on $d,U,\Om$.
\end{lem}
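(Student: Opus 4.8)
The plan is to track the dependence on the parameter $\lambda$ explicitly through the standard Moser iteration proof of the Harnack inequality for the uniformly elliptic operator $-\Delta + (\lambda + p(x))$. The point is that the zeroth-order coefficient $c(x) := \lambda + p(x)$ is nonnegative, bounded by $\lambda + \|p\|_{L^\infty(\Omega)}$, and this $L^\infty$ bound is the only way the parameter enters the constants. First I would fix a slightly larger intermediate subdomain $V$ with $U \subset\subset V \subset\subset \Omega$, and reduce to proving a local Harnack inequality on balls: there is $r_0 > 0$ (depending on $\mathrm{dist}(U,\partial V)$) such that for every ball $B_{2r}(x) \subset V$ with $r \le r_0$, $\sup_{B_r(x)} u \le \gamma\, \inf_{B_r(x)} u$ with $\gamma$ of the form $\e^{C(1+\lambda)}$. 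A chaining/covering argument over a Harnack chain of such balls inside the connected region then upgrades this to the global estimate \eqref{esti-har}, with the total constant still of the form $\e^{C(1+\lambda)}$ (the number of balls in the chain depends only on $U, V, \Omega$, and multiplying finitely many factors $\e^{C(1+\lambda)}$ keeps the exponential form).

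For the local estimate on a ball, I would run Moser iteration in two halves. The $L^p$-to-$L^\infty$ (local boundedness) step and the cross-over step ($\log u$ has bounded mean oscillation, via a Poincaré/John–Nirenberg argument applied to a logarithmic test function) together give the bound. The key bookkeeping: in the Caccioppoli-type energy inequalities one obtains terms of the shape $\int c(x) u^{q} \zeta^2 \le (\lambda + \|p\|_\infty) \int u^q \zeta^2$, so each constant acquires a factor that is affine in $\lambda$, i.e. bounded by $C(1+\lambda)$. Since Moser iteration multiplies/sums a geometric series of such constants and then takes a limit, the resulting multiplicative constant is bounded by something like $\big(C(1+\lambda)\big)^{C}$, and in the John–Nirenberg step one exponentiates, producing $\e^{C(1+\lambda)}$ after absorbing polynomial factors into the exponential (using $t^k \le \e^{C_k t}$ for $t \ge 1$). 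Alternatively, and perhaps more cleanly, one can invoke a standard reference form of Harnack (e.g. Gilbert–Trudinger, Theorem 8.20) which states $\sup_{B_r} u \le C \inf_{B_r} u$ with $C = C(d, \Lambda r^2)$ where $\Lambda$ bounds the zeroth-order coefficient; here $\Lambda r^2 \le (\lambda + \|p\|_\infty) r_0^2$, and reading off the explicit dependence of $C$ on this quantity in the proof yields the exponential bound.

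The main obstacle is purely the explicit constant-tracking: the textbook Harnack inequality is usually stated with an unspecified constant depending on the coefficient bounds, so the work is to re-examine the Moser iteration (or De Giorgi) proof and verify that the dependence on $\lambda$ is at worst exponential of the form $\e^{C(1+\lambda)}$ rather than, say, doubly exponential. I expect this to go through because $\lambda$ enters only linearly in the $L^\infty$ bound of the zeroth-order term, each iteration contributes a factor polynomial in $(1+\lambda)$, and the geometric summation plus the final exponentiation in John–Nirenberg collapse all of this into a single $\e^{C(1+\lambda)}$. A secondary point to be careful about is that the Harnack chain connecting points of $U$ must stay inside $\Omega$ and use balls of a fixed radius $r_0$; since $U \subset\subset \Omega$ and $\Omega$ is connected with smooth boundary, such a chain of uniformly bounded length exists, so the number of factors multiplied is controlled by $d, U, \Omega$ alone, as claimed.
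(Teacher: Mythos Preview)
Your plan is a legitimate route to the stated Harnack inequality, but it is genuinely different from the paper's proof. The paper does \emph{not} use Moser iteration or Harnack chains. Instead it applies a Bernstein-type gradient estimate: set $v=\log u$ and $\zeta=|\nabla v|^2$, observe from the equation that $\zeta=-\Delta v+\lambda+p(x)$, derive the identity $\Delta\zeta+2\nabla v\cdot\nabla\zeta=2|\nabla^2 v|^2+2\nabla v\cdot\nabla p$, and then run a maximum-principle argument on $z=\chi^4\zeta$ with a cutoff $\chi$ adapted to $(U,\Omega)$. At the maximum point this yields the algebraic inequality $\chi^4\zeta^2\le C(\chi^2\zeta+\chi^3\zeta^{3/2}+\lambda^2+|\nabla p|^2)$, whence $\|\zeta\|_{L^\infty(U)}\le C(1+\lambda)$ and therefore $\sup_U u/\inf_U u\le \e^{\mathrm{diam}(U)\,\|\nabla v\|_{L^\infty(U)}}\le \e^{C(1+\lambda)}$.

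The trade-offs are as follows. The paper's Bernstein approach is short and self-contained, makes the $\lambda$-dependence transparent in a single algebraic inequality, and yields a pointwise bound on $|\nabla\log u|$ (in fact $|\nabla\log u|\le C\sqrt{1+\lambda}$ on $U$), which is slightly more information than a bare Harnack estimate. Your Moser-iteration route is more robust---it would work for divergence-form operators with merely bounded measurable coefficients, where the Bernstein computation is unavailable---but, as you yourself flag, the constant-tracking through Caccioppoli, the geometric iteration, and the John--Nirenberg step is genuinely laborious, and the sketch stops short of carrying it out. Either method suffices for the lemma as stated; for the smooth-coefficient setting here the paper's direct argument is the cleaner choice. (Minor note: the reference is Gilbarg--Trudinger, not ``Gilbert--Trudinger''.)
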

The proof is followed from the classical idea in deriving the Harnack inequality for parabolic equation from \cite{E10}.
\begin{proof}
Without loss of generality, we assume $u>0$ (else consider $u+\ve$, and send $\ve\to0$). Moreover, we should point out here that if we let
$$
v=\log u,
$$
then the Harnack inequality follows once we show that $\|\na v\|_{L^\infty(U)} \le C\la$. Indeed, in this case for $x_1,x_2\in U$ we have
$$
\f{u(x_2)}{u(x_1)} = \e^{v(x_2) - v(x_1)}
\le \e^{|x_1 - x_2| \|\na v\|_{L^\infty(U)}}
$$
and so \eqref{esti-har} holds true from boundedness of the domain $U$.

We conclude from the equation
$$
-\De u+ (\la+p(x)) u = 0 
$$
that
\begin{equation}
\label{equ-w}
|\na v|^2=-\De v + \la + p(x) = :\zeta.
\end{equation}
Now we point out that a sufficient condition for \eqref{esti-har} to hold is that $\|\zeta\|_{L^\infty(U)}\le C(\la+1)$. We shall prove the latter by looking at the elliptic equation obeyed by $\zeta$. To get an elliptic equation for $\zeta$, by direct calculation, we find
\begin{align*}
\pa_k\zeta &= 2\sum_{i=1}^d (\pa_iv)\pa_{ki}v,
\\
\pa_{kk}\zeta &= 2\sum_{i=1}^d (\pa_iv)\pa_{kki}v + 2\sum_{i=1}^d (\pa_{ki}v)^2.
\end{align*}
Therefore,
\begin{equation}
\label{equ-Aw}
	\De\zeta
 = 	2\sum_{i=1}^d \pa_iv\left(\sum_{k=1}^d \pa_{kki}v\right)
		 + 2\sum_{i,k=1}^d (\pa_{ki}v)^2
 = 	2\na v\cdot\na(\De v) + 2|\na^2 v|^2,
\end{equation}
  where $\na^2 v$ denotes the Hessian matrix;
\[
	\na^2 v
=\begin{pmatrix}
	\pa_{11}v & \pa_{12}v & \cdots & \pa_{1n}v \\
	\pa_{21}v & \pa_{22}v & \cdots & \pa_{2n}v \\
	\vdots	  & \vdots    & \ddots & \vdots \\
	\pa_{n1}v & \pa_{n2}v & \cdots & \pa_{nn}v 
\end{pmatrix}.
\]
  On the other hand, from \eqref{equ-w}, we have
\begin{equation}
\label{equ-w'}
\pa_k\zeta =-\pa_k(\De v) + \partial_kp(x).
\end{equation}
  Combining \eqref{equ-Aw} and \eqref{equ-w'} yields
\begin{equation}
\label{esti-ellip}
	\De \zeta+2\na v\cdot\na\zeta = 2|\na^2v|^2+2\nabla v\cdot\nabla p(x).
\end{equation}
  Let $\chi$ be a smooth cutoff function adapted to $(U,\Om)$, say, 
$\chi\in C_0^\infty(\Om)$ such that $0\le\chi\le1$ and $\chi(x)=1$ if 
$x\in U$, and define 
$$
z = \chi^4 \zeta.
$$

  The function $z$ is continuous (recall $u$ is $C^2$) and has compact support 
in $U$, so it attains its maximum at some point $x_0\in\Om$.
  At this point we have $\na z(x_0)=0$, and therefore
\begin{equation}
\label{equ-x0}
	\chi(x_0)\pa_k\zeta(x_0) = -4\pa_k\chi(x_0)\zeta(x_0).
\end{equation}
  Moreover, since $x_0$ is the maximum point $x_0\in\Om$ of $z$, we also have 
at $x_0$ that
\begin{equation}
\label{esti-z}
0\ge 	\De z + 2\na v\cdot\na z 
=	\chi^4 \left( \De\zeta + 2\na v\cdot\na\zeta\right)  + R
=	2\chi^4(|\na^2 v|^2+\nabla v\cdot\nabla p)+R,
\end{equation}
  where we have used \eqref{esti-ellip} and set
\[
	R:=12\chi^{2}|\na\chi|^{2}\zeta+4\chi^{3}(\De\chi)\zeta
	+8\chi^{3}\na\chi\cdot\na \zeta+8\chi^{3}(\na v\cdot\na\chi)\zeta.
\]
  By using \eqref{equ-x0}, we obtain the bound for $R$ as follows;
$$
	|R|\le C(\chi^2|\zeta| + \chi^3|\na v||\zeta|).
$$
  From this estimate and \eqref{esti-z}, we have
\begin{equation}
\label{eq:est-Hessian}
2\chi^4|\na^2 v|^2\le C(\chi^2|\zeta| + \chi^3|\na v||\zeta|)+2\chi^4 |\nabla v\cdot\nabla p|.
\end{equation}
Now combining \eqref{equ-w}, \eqref{esti-ellip} and \eqref{eq:est-Hessian} 
with the facts $|\De v|\le |\na^2v|$ and 
$$ 
2\chi^4|\nabla v\cdot\nabla a|\le  \chi^4|\nabla v|^2 + \chi^4 |\nabla p|^2
\le \chi^2|\nabla v|^2 + |\nabla p|^2,
$$
 we obtain
\begin{align*}
	\chi^4\zeta^2
&=	\chi^4|-\De v + \la|^2
\le 	2\chi^4|\De v|^2 + 2\chi^4\la^2 
\le 	2\chi^4|\na^2 v|^2 + 2\la^2 \nonumber \\
&\le 	C(\chi^2|\zeta| + \chi^3|\na v||\zeta|)+2\la^2 +  \chi^2 |\nabla v|^2 + |\nabla p|^2,
\end{align*}
  and noting that $\zeta:=|\na v|^2$, we further see that
\begin{equation}
\label{esti-x4w2}
\chi^4\zeta^2
\le C(\chi^2|\zeta| + \chi^3|\zeta|^{\f32}+\la^2 + |\nabla p|^2)
\end{equation}
for some constant $C$ that depends on $U,\Om$. But \eqref{esti-x4w2} shows that $\chi^2\zeta$ is bounded by $C(\la+1)$ on $\Om$, and since $\chi\equiv1$ on $U$, it also gives a bound on $\|\zeta\|_{L^\infty(U)}\le C(\la+1)$, where $C$ only depends on $U,\Om$, thereby concluding the proof.
\end{proof}

On the basis of the Harnack inequality, we can get the following corollary.
\begin{coro}
\label{coro-geq0}
Let the weight function $\mu\in C[0,1]$ be nonnegative, and not vanish in $[0,1]$. Suppose the non-negative function $u\in C_0^\infty((0,T);H^4(\Om))$ solves the following problem 
$$
\D^{(\mu)}_t u -\De u + p(x)u\not\equiv 0, \le0 \mbox{ in $\Om\times(0,\de)$,}
$$
where $0<\de<T$.
 Then for any $x\in\Om$, there exists $t_x\in(0,\de)$ such that 
$u(x,t_x)>0$.
\end{coro}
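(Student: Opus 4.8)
The plan is to argue by contradiction and transfer the whole question to the frequency domain, where Lemma~\ref{lem-har} becomes applicable. Since $u\ge 0$, the negation of the assertion is that there is a point $x_*\in\Om$ with $u(x_*,t)=0$ for every $t\in(0,\delta)$; I would assume this. Because $u$ has compact support in time and $u(\cdot,0)=0$, the Laplace transform $U(x,s):=\int_0^\infty \e^{-st}u(x,t)\,\dt$ is defined for all $s$, is nonnegative for $s>0$, and $\cL[\D^{(\mu)}_t u](\cdot,s)=\la(s)\,U(\cdot,s)$ with $\la(s):=sw(s)=\int_0^1\mu(\al)s^\al\,\da>0$ for $s>0$; hence $U$ solves the elliptic equation
\[
-\De U+(\la(s)+p(x))\,U=F(x,s)\quad\text{in }\Om,\qquad F:=\cL\big[\D^{(\mu)}_t u-\De u+pu\big].
\]

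Next I would isolate the two estimates that must collide as $s\to\infty$. On one side, $u(x_*,\cdot)\equiv 0$ on $(0,\delta)$ forces
\[
0\le U(x_*,s)=\int_\delta^T\e^{-st}u(x_*,t)\,\dt\le C_*\,\e^{-s\delta},
\]
so $U$ is exponentially small at $x_*$ at rate $\delta$. On the other side, since $\D^{(\mu)}_t u-\De u+pu$ is $\le 0$ and $\not\equiv 0$ on $\Om\times(0,\delta)$, continuity produces a ball $\ov{B(x_0,r)}\subset\Om$ and an interval $[t_1,t_2]\subset(0,\delta)$ on which it is $\le-c_1<0$; taking Laplace transforms, discarding the nonpositive part of the integral over $(0,\delta)$ and bounding the $(\delta,T)$-part by $C\e^{-s\delta}$, one obtains $F(x,s)\le-c_2\,\e^{-st_2'}$ for all $x\in B(x_0,r)$ and all large $s$, for some $t_2'\in(t_2,\delta)$ — i.e., $F$ is genuinely negative near $x_0$ at a rate strictly smaller than $\delta$.

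The core of the proof is to propagate the smallness of $U$ from $x_*$ to $B(x_0,r)$ and then turn it into an upper bound for $|F(x_0,s)|$ that undercuts $c_2\e^{-st_2'}$. I would split $U=U^{\mathrm b}+U^{\mathrm s}$, where $U^{\mathrm b}$ solves the homogeneous equation $-\De U^{\mathrm b}+(\la(s)+p)U^{\mathrm b}=0$ with $U^{\mathrm b}|_{\pa\Om}=U|_{\pa\Om}\ge 0$ (so $U^{\mathrm b}\ge 0$ by the maximum principle) and $U^{\mathrm s}$ carries the source with zero boundary data. Lemma~\ref{lem-har} applies to $U^{\mathrm b}$ on a subdomain $V$ with $x_*,\ov{B(x_0,r)}\subset V\subset\subset\Om$, giving $\sup_V U^{\mathrm b}\le \e^{C(1+\la(s))}\,U^{\mathrm b}(x_*,s)$; combined with interior Schauder estimates for $U$ near $x_0$ — which cost only powers of $\la(s)$ — this bounds $|F(x_0,s)|$ by $\e^{C(1+\la(s))}$ (up to such powers) times the smallness obtained at $x_*$. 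Here the explicit exponential dependence on the spectral parameter in Lemma~\ref{lem-har} is decisive: because $\mu\in C[0,1]$,
\[
\la(s)=\int_0^1\mu(\al)\,s^\al\,\da\le\|\mu\|_{C[0,1]}\,\f{s-1}{\log s}=o(s)\qquad(s\to\infty),
\]
so $\e^{C(1+\la(s))}\e^{-s\delta}\to 0$ faster than any exponential and the polynomial losses from the elliptic estimates are absorbed; this would give $|F(x_0,s)|\le\e^{-s\delta+o(s)}$, contradicting $|F(x_0,s)|\ge c_2\e^{-st_2'}$ with $t_2'<\delta$, and the contradiction proves the corollary.

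The step I expect to be most delicate is the bookkeeping that makes the previous paragraph rigorous: the Laplace transform is global in time, whereas the sign of $\D^{(\mu)}_t u-\De u+pu$ is only available on $(0,\delta)$, so one must control $F$ on all of $\Om$ — hence also $U^{\mathrm s}$ and then $U^{\mathrm b}(x_*,s)$ — using only that $u$, and therefore the source, vanishes near $t=0$, and one must track every constant entering the Harnack and interior elliptic estimates so that it stays subexponential in $s$ and is swallowed by the gain coming from the rate $\delta$. It is precisely at this point that both the explicit $\la$-dependence of Lemma~\ref{lem-har} and the continuity of $\mu$ are indispensable.
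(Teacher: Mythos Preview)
Your plan breaks down at the step where you try to ``propagate the smallness of $U$ from $x_*$ to $B(x_0,r)$ and then turn it into an upper bound for $|F(x_0,s)|$''. Note that $F(\cdot,s)$ is \emph{explicitly} the Laplace transform of the source $\D_t^{(\mu)}u-\De u+pu$; it is a given datum, and no amount of elliptic estimation on $U$ can make it smaller than it is. Any interior Schauder bound for $U$ near $x_0$ already contains $\|F\|$ on the right-hand side, so feeding that back into $F=-\De U+(\la(s)+p)U$ is circular and never produces $|F(x_0,s)|\le \e^{-s\de+o(s)}$. If instead you aim for a contradiction at the level of $U$ itself, the splitting $U=U^{\mathrm b}+U^{\mathrm s}$ still fails: since the source is only known to be $\le 0$ on $(0,\de)$ and can be strictly negative from some time $t_1<\de$ on, one has $F(x,s)\le C\e^{-s\de}$ but only $F(x,s)\ge -C\e^{-st_1}$, hence $U^{\mathrm s}(x_*,s)$ is merely $\ge -C\e^{-st_1}/\la(s)$, and therefore $U^{\mathrm b}(x_*,s)=U(x_*,s)-U^{\mathrm s}(x_*,s)\le C\e^{-st_1}/\la(s)$ --- exponential at rate $t_1$, not $\de$. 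After Harnack, $U^{\mathrm b}$ on $V$ is at most $\e^{C(1+\la(s))-st_1}$, which cannot be played off against any available lower bound at rate strictly less than $t_1$.

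The paper avoids the inhomogeneity altogether: it introduces an auxiliary $v$ solving the \emph{homogeneous} distributed-order equation on a subdomain $U\subset\subset\Om$ containing $x_*$, with boundary data $v|_{\pa U}=u|_{\pa U}$. The maximum principle gives $0\le v\le u$, so $v(x_*,\cdot)\equiv 0$ on $(0,\de)$ as well; and since the source being $\not\equiv 0$ on $(0,\de)$ forces $u\not\equiv 0$ there, one can arrange $u\not\equiv 0$ on $\pa U\times(0,\de)$, whence $v(x_2,t_2)>0$ for some $x_2\in U$, $t_2<\de$. Now $\wh v$ solves the \emph{homogeneous} elliptic equation $-\De\wh v+(sw(s)+p)\wh v=0$, Lemma~\ref{lem-har} applies directly, and comparing the two exponential rates $t_2-\eta<\de$ yields the contradiction. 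The key idea you are missing is this passage to a homogeneous comparison problem via the maximum principle; without it, the Harnack inequality has no clean object to act on.
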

\begin{proof}
  We prove this corollary by contradiction argument.
  For this, we assume there exists $x_1\in\Om$ such that $u(x_1,\cdot)\equiv0$ 
in $(0,\de)$.
  Since $u\not\equiv0$ in $\Om\times(0,\de)$, we can choose a subdomain 
$U\subset\subset\Om$ containing $x_1$ and such that $u\not\equiv0$ on 
$\pa U\times(0,T)$.
  Moreover, from our assumption, it follows that 
$u|_{\pa U}\in C_0^\infty((0,\infty); H^{\f72}(\pa U))$ 
by letting $u|_{\pa U}=0$ outside of $(0,T)$. 
  Keeping this in mind, we introduce an auxiliary function $v$ satisfies the 
following equation
\begin{equation}
\label{equ-v'}
\left\{
\begin{alignedat}{2}
&\D^{(\mu)}_t v -\De v + p(x)v= 0  &\quad& \mbox{in $U\times(0,\infty)$,}
\\
&v|_{t=0}=u|_{t=0}=0 &\quad& \mbox{in $U$,}
\\
&v = u
&\quad& \mbox{on $\pa U\times(0,\infty)$.}
\end{alignedat}
\right.
\end{equation}
  By Proposition \ref{thm-fp}, we see that \eqref{equ-v'} admits a unique solution 
$v\in C^\infty((0,\infty);H^4(U))$ which does not vanish in $U\times(0,T)$.
  Moreover, we have
$$
\|v\|_{C([0,\infty); H^4(U))} \le C\|u\|_{C^2([0,T];H^{\f72}(\pa U))} < \infty,
$$
  and hence the Sobolev embedding theorem implies that 
$v\in C^\infty((0,\infty);C^2(\overline U))$, for $d\le3$.
  Moreover, from the Maximum principle (see, e.g., \cite{L-FCAA}), it follows 
that
$$
0\le v\le u \mbox{ in $U\times(0,\infty)$.}
$$
  Again Proposition \ref{thm-fp} entails the existence of the Laplace transform of 
the solution $v$;
$$
\wh v(x;s)=\int_{0}^{\infty}v(x,t)\e^{-st}\d t.
$$
  Then taking the Laplace transforms on both sides of \eqref{equ-v'} derives
$$
\left\{
\begin{alignedat}{2}
&-\De \wh v(x;s) + sw(s)\wh v(x;s) + p(x) \wh v(x;s) = 0
&\quad& \mbox{in $U$,}
\\
&\wh v(x;s)=\wh u(x;s) &\quad& \mbox{in $\pa U$.}
\end{alignedat}
\right.
$$
  Since the function $v$ does not vanish in $U\times(0,T)$, we choose 
$(x_2,t_2)\in U\times(0,T)$ such that $v(x_2,t_2)>0$.
  From the Harnack inequality proved in Lemma \ref{lem-har}, for the subdomain 
$V$ such that $V\subset\subset U$ and $x_1,x_2\in V$, we have
$$
\sup_{V}\wh v(x;s) \le \e^{C(1+|sw(s)|)} \inf_{V} \wh v(x;s),\quad s>0.
$$
From the choice of $x_1,x_2$ and $V$, we see that
$$
\wh v(x_2;s) 
\le \sup_{V}\wh v(x;s) \le \e^{C(1+|sw(s)|)} \inf_{V} \wh v(x;s)
\le \e^{C(1+|sw(s)|)} \wh v(x_1;s),\quad s>0.
$$
  Now since $u(x_1,t)=0$ for $t\in(0,\de)$, and $v\le u$, we note that 
$v(x_1,t)=0$ if $t\in(0,\de)$, which implies that
$$
	\wh v(x_1;s)
 = 	\int_{\de}^\infty v(x_1,t)\e^{-st}\dt
\le 	Cs^{-1}\e^{-\de s},\quad s>0.
$$
On the other hand, we have 
$$
	\wh v(x_2;s) \ge \int_{t_2-\eta}^{t_2+\eta} v(x_2,t) \e^{-st}\dt
\ge 	c_1s^{-1}\e^{-(t_2-\eta)s},\quad s>0.
$$
  Here $c_1:=\inf_{t\in(t_2-\eta,t_2+\eta)} v(x_2,t) >0$.
  Combining all the estimates, we find
$$
	\e^{-(t_2-\eta)s} \le \e^{C(1+|sw(s)|)} C\e^{-\de s},\quad s>0.
$$
  Here $t_2\in(0,\de)$.
  Moreover, from the notation of $sw(s):=\int_0^1 s^\al\mu(\al)\da$, we have
$$
	|sw(s)| 
\le 	\|\mu\|_{C[0,1]}|\int_0^1 \e^{\al\log s}\da
\le 	\|\mu\|_{C[0,1]}\f{s-1}{\log s},\quad s>0,
$$
  which implies
$$
	\e^{-(t_2-\eta)s} \le \e^{C(1+\f{s-1}{\log s})} \e^{-\de s},\quad s>1.
$$
  Letting $s\to\infty$, we get a contradiction in view of $t_2-\eta<\de$ and 
$1/\log s<<\de$ if $s$ is sufficiently large.
  We must have that for any $x\in\Om$ there exists $t_x\in(0,\de)$ such that 
$u(x,t_x)>0$.
  This completes the proof of the corollary.
\end{proof}

\bigskip
\begin{coro}
\label{coro-geq0'}
  Let the weight function $\mu\in C[0,1]$ be nonnegative, and not vanish in 
$[0,1]$.
  Suppose the non-negative function $u\in C_0^\infty((0,T);H^4(\Om))$ solves 
the following problem 
\begin{align*}
	\D^{(\mu)}_t u -\De u + p(x)u \le,\not\equiv 0 \mbox{ in $\Om\times(0,\de)$,}\\
	\pa_{\nu}u=0\mbox{ on $\pa\Om\times(0,\de)$}
\end{align*}
where $0<\de<T$.
 Then for any $x\in\pa\Om$, there exists $t_x\in(0,\de)$ such that 
$u(x,t_x)>0$.
\end{coro}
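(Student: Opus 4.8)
The plan is to re‑run the contradiction argument of Corollary~\ref{coro-geq0}, but with the observation point sitting on $\pa\Om$, using the homogeneous Neumann condition to localize there. Suppose the conclusion fails: there is $x_1\in\pa\Om$ with $u(x_1,t)=0$ for every $t\in(0,\de)$. Since the present hypotheses contain those of Corollary~\ref{coro-geq0}, that corollary applies to $u$ and yields, for each $y\in\Om$, a time $t_y\in(0,\de)$ with $u(y,t_y)>0$; in particular $u$ vanishes identically on no nonempty relatively open subset of $\Om\times(0,\de)$.

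Because $\pa\Om\in C^2$, for all small $r>0$ the set $U:=\Om\cap B(x_1,r)$ is a connected subdomain of $\Om$ whose boundary is made of $\Ga_D:=\Om\cap\pa B(x_1,r)$, a smooth hypersurface lying in $\Om$, and $\Ga_N:=\pa\Om\cap B(x_1,r)$, a relatively open piece of $\pa\Om$ containing $x_1$, the two meeting along a lower‑dimensional edge. The trace $u|_{\Ga_D}$, extended by zero outside $(0,T)$, belongs to $C_0^\infty((0,\infty);H^{7/2}(\Ga_D))$, so I introduce the auxiliary function $v$ solving the mixed problem
$$
\D^{(\mu)}_t v-\De v+p(x)v=0\ \mbox{ in }U\times(0,\infty),\quad v|_{t=0}=0,\quad v=u\ \mbox{ on }\Ga_D\times(0,\infty),\quad \pa_\nu v=0\ \mbox{ on }\Ga_N\times(0,\infty).
$$
By an argument parallel to that of Proposition~\ref{thm-fp} (using the eigensystem of $-\De+p$ on $U$ under these mixed conditions, and combining the Dirichlet‑type construction of Section~\ref{sec-fp} with the Neumann‑type one), this problem has a unique solution which is regular away from the edge $\Ga_D\cap\Ga_N$, lies there in $C^\infty((0,\infty);H^4)$, and obeys a time‑global bound $\|v\|_{C([0,\infty);H^4(U'))}\le C\|u\|_{C^2([0,T];H^{7/2}(\Ga_D))}<\infty$ for $U'$ away from that edge. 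Since $d\le3$, $v$ is then $C^2$ up to $\Ga_N$ near $x_1$ and up to $\Ga_D$ away from the edge, and $\wh v(x;s):=\int_0^\infty v(x,t)\e^{-st}\dt$ is well defined for $s>0$. Precisely as in the proof of Corollary~\ref{coro-geq0}, comparison with $u$ via the maximum principle (Proposition~\ref{prop:Neumann}, adapted to $U$ with the above mixed data) gives $0\le v\le u$ in $U\times(0,\de)$; hence $v(x_1,\cdot)\equiv0$ on $(0,\de)$, while $v=u$ on $\Ga_D$, the first paragraph, and continuity of $v$ up to $\Ga_D$ furnish $(x_2,t_2)\in U\times(0,\de)$ with $v(x_2,t_2)>0$.

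Taking the Laplace transform of the equation for $v$, and noting $sw(s)=\int_0^1 s^\al\mu(\al)\da>0$ for $s>0$ since $\mu\ge,\not\equiv0$, one obtains
$$
-\De\wh v+(sw(s)+p(x))\wh v=0\ \mbox{ in }U,\qquad \pa_\nu\wh v=0\ \mbox{ on }\Ga_N.
$$
Choose $V$ with $\ov V\subset U\cup\Ga_N$ and $x_1,x_2\in\ov V$, and apply a Harnack inequality up to the Neumann part of the boundary — the Neumann counterpart of Lemma~\ref{lem-har}, proved by the same method with a cutoff respecting the Neumann condition on $\Ga_N$, or equivalently after flattening $\pa\Om$ near $\Ga_N$ and reflecting evenly — to get $\wh v(x_2;s)\le\e^{C(1+|sw(s)|)}\wh v(x_1;s)$ for $s>0$, with $C=C(U,\Om)$. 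From $v(x_1,\cdot)\equiv0$ on $(0,\de)$ and the time‑global bound on $v$, $\wh v(x_1;s)\le Cs^{-1}\e^{-\de s}$; choosing $\eta>0$ with $(t_2-\eta,t_2+\eta)\subset(0,T)$ and $c_1:=\inf_{(t_2-\eta,t_2+\eta)}v(x_2,\cdot)>0$ gives $\wh v(x_2;s)\ge c_1 s^{-1}\e^{-(t_2-\eta)s}$. Since $|sw(s)|\le\|\mu\|_{C[0,1]}(s-1)/\log s=o(s)$ as $s\to\infty$, the resulting inequality $\e^{-(t_2-\eta)s}\le C\e^{C(1+(s-1)/\log s)}\e^{-\de s}$ is impossible for $s$ large, because $t_2<\de$. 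This contradiction shows that for every $x\in\pa\Om$ there is $t_x\in(0,\de)$ with $u(x,t_x)>0$.

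The only genuinely new ingredients, beyond the interior machinery already in hand, are the two boundary items in the middle step: well‑posedness of the mixed (Zaremba‑type) problem defining $v$ — needed only locally, away from the edge $\Ga_D\cap\Ga_N$, so no corner regularity is at stake — and a Harnack inequality up to the Neumann boundary whose constant retains the growth $\e^{C(1+|sw(s)|)}$ in the Laplace parameter. Both are routine given $\pa\Om\in C^2$; keeping track of that constant, exactly as in Lemma~\ref{lem-har} and Corollary~\ref{coro-geq0}, is the only slightly delicate point.
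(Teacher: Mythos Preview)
Your argument is correct, but it takes a more elaborate route than the paper's. The paper simply uses the homogeneous Neumann condition $\pa_\nu u=0$ to extend $u$ by even reflection across $\pa\Om$ near the given boundary point, thereby placing that point in the interior of an enlarged domain $\Om'$; one then chooses $U\subset\subset\Om'$ containing it and re-runs the proof of Corollary~\ref{coro-geq0} verbatim, with no mixed boundary conditions and no boundary Harnack inequality needed. You instead work directly in the half-ball $U=\Om\cap B(x_1,r)$ and must therefore (i) solve a Zaremba-type auxiliary problem for $v$ and (ii) establish a Harnack inequality up to the Neumann portion $\Ga_N$ of $\pa U$. Both ingredients are indeed obtainable---you even observe that the boundary Harnack follows by flattening and even reflection---but invoking reflection at the very beginning, as the paper does, avoids the mixed-problem machinery altogether. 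The trade-off is that your approach keeps everything inside the original $\Om$, whereas the paper's reflection across a curved boundary produces a variable-coefficient elliptic operator and so tacitly requires Lemma~\ref{lem-har} in slightly greater generality than stated; that extension, however, is routine.
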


\bigskip
  By the boundary condition $\pa_{\nu}u=0$, we can extend the domain $\Om$ to 
$\Om'$ which contains $x_{0}\in\pa\Om$ as its inner point.
  Then we can take a subdomain $U$ such that $x_{0}\in U\subset\subset\Om'$ 
and repeating the same argument as in the proof of Corollary \ref{coro-geq0}, 
we will see that the above assertion holds true.

\subsection{Proof of Theorem \ref{thm-IP}}
In this part, we further fix some general settings and notations. We introduce the Riemann-Liouville fractional integral operator $J^\al$:
$$
J^\al \vp :=\f1{\Ga(\al)} \int_0^t \f{\vp(r)}{(t-r)^{1-\al}} \dr,\quad t>0,
$$
where $\al>0$. We see that $J^\al$ admits the semigroup property
$$
J^\al J^\be = J^{\al+\be},\quad \al>0,\ \be>0,
$$
(see, e.g., \cite{P98}). Now we are ready to give the proof.
\begin{proof}[\bf Proof of Theorem \ref{thm-IP}, Dirichlet boundary condition]

  First, in view of the fact that $d\le3$, we conclude from Proposition \ref{thm-fp} 
that $u\in C_0^\infty((0,T);C^2(\overline\Om))$.
  Moreover, the non-negativity of the function $g$ combined with the maximum 
principle (see, e.g., \cite{L-FCAA}) yields that $u(t)\ge0$ in $\Om$ for any 
$t>0$.
  The same property holds true for the solution $\wt u$ to the IBVP 
\eqref{equ-u-distri} with weight function $\wt\mu$.

  Taking the operator $J^2$ on both sides of the equation 
\eqref{equ-u-distri}, and noting that $u(0)=0$ implies 
$J^2\pa_t^\al u=\pa_t^\al J^2u=J^{2-\alpha}u$, we find
$$
\left\{
\begin{alignedat}{2}
&\D^{(\mu)}_t (J^2u) - \De (J^2u) + p(x) J^2u=0
&\quad& \mbox{in $\Om\times(0,T)$,}
\\
&J^2u|_{t=0}=0 &\quad& \mbox{in $\Om$.}
\end{alignedat}
\right.
$$

  Now by taking the difference of the above systems of $J^2u$ and $J^2\wt u$, 
it turns out that the system for $v:= J^2u - J^2\wt u$ reads
$$
\left\{
\begin{alignedat}{2}
&\D^{(\mu)}_t v -\De v + p(x)v = \int_0^1 (\wt\mu(\al) - \mu(\al)) J^{2-\al}\wt u\da &\quad& \mbox{in $\Om\times(0,T)$,}
\\
&v|_{t=0}=0 &\quad& \mbox{in $\Om$.}
\end{alignedat}
\right.
$$

Next, we will use a contradiction argument to finish the proof. For this, we assume that $\mu\not\equiv\wt\mu$. More precisely, since $\mu$ is a finite oscillatory function, without loss of generality, we can assume that there exists $\al_0\in(0,1]$ such that $\wt\mu(\al)<\mu(\al)$ for $\al\in[\al_0-4\ve,\al_0)$ and $\wt\mu(\al)=\mu(\al)$ if $\al\in(\al_0,1]$. Then we can assert that the following inequality
$$
RHS:=\int_0^1 (\wt\mu(\al) - \mu(\al)) J^{2-\al}\wt u\da \le,\not\equiv0\mbox{ in $\Om$}
$$
is valid for any $0<t<\de$ with a sufficiently small constant $\de>0$. Indeed, notice that $\mu,\wt\mu$ is continuous on $[0,1]$, since $\wt\mu(\al)<\mu(\al)$ for $\al\in[\al_0-4\ve,\al_0)$, one can choose a constant $c_0$ and sufficiently small constant $\ve>0$ such that
$$
\mu(\al) - \wt\mu(\al)  > c_0>0,\quad \al\in(\al_0-3\ve,\al_0-\ve),
$$
which combined with the fact $\widetilde u\ge0$ implies that
\begin{align*}
	RHS
=&	\left(\int_0^{\al_0-3\ve} + \int_{\al_0-3\ve}^1\right)
	( \wt\mu(\al) - \mu(\al) ) J^{2-\al}\wt u\da \\
\le& 	c_1 \int_0^{\al_0-3\ve} J^{2-\al}\wt u\da
		-c_0\int_{\al_0-2\ve}^{\al_0-\ve} J^{2-\al}\wt u\da,
\end{align*}
  where $c_1:=\|\mu\|_{C[0,1]}+\|\wt\mu\|_{C[0,1]}$.
  By changing the variable as
\[
	\al\mapsto
	(\al_0-2\ve)\left(1-\f{\al}{\al_0-3\ve}\right)
	 + (\al_0-\ve) \f{\al}{\al_0-3\ve},
\]
  the above inequality can be rephrased as follows
\begin{align*}
RHS \le \int_{\al_0-2\ve}^{\al_0-\ve} \left( \f{c_1(\al_0-3\ve)}{\ve}J^{2- \beta} \wt u - c_0J^{2-\al}\wt u \right) \da,
\end{align*}
  where we have set $\beta:=\f{(\al_0-3\ve)(\alpha-\al_0+2\ve)}{\ve}$.
  Moreover, again from the semigroup property of the Riemann-Liouville fractional operator $J^\alpha$, it follows that
\begin{align*}
RHS \le \int_{\al_0-2\ve}^{\al_0-\ve} \left( \f{c_1(\al_0-3\ve)}{\ve}J^1J^{\alpha-\beta}J^{1- \alpha} \wt u - c_0J^{2-\al}\wt u \right) \da.
\end{align*}
Again by the non-negativity of $\widetilde u$ and noticing the definition of $J^\alpha$, $\alpha>0$, we have
$$
J^1J^{\alpha-\beta}J^{1-\al}\wt u(t)
=\|J^{\alpha-\beta}J^{1-\al}\wt u\|_{L^1(0,t)}
=\f{1}{\Ga(\alpha-\beta)}\left\| \int_0^t (t-\tau)^{\alpha-\beta-1} J^{1-\al}\wt u(\tau)\d\tau \right\|_{L^1(0,t)},
$$
which entails
$$
J^1J^{\alpha-\beta}J^{1-\al}\wt u(t)\le \f{t^{\alpha-\beta}}{\Ga(1+\alpha-\beta)}  \int_0^t J^{1-\al}\wt u(\tau)\d\tau
=\f{t^{\alpha-\beta}}{\Ga(1+\alpha-\beta)} J^{2-\al}\wt u(t)
$$
upon applying the Young inequality and $\Ga(1+\gamma)=\gamma\Ga(\gamma)$, $\gamma>0$. Here in the last equality we again used the definition of the Riemann-Liouville fractional integral. Moreover, since $\alpha\in[\al_0-2\ve,\al_0-\ve]$, we further see that 
$$
0<2\ve
\le \alpha - \beta 
\le \al_0-2\ve <1,
$$
which implies that $t^{\alpha-\beta} \le t^{2\ve}$ if $t\in(0,1)$, and from the continuity of the Gamma function on the interval $[2\ve,1]$, it then follows that
$$
J^1J^{\alpha-\beta}J^{1-\al}\wt u(t)
\le c_2t^{2\ve} J^{2-\al}\wt u(t),\quad 0<t<1,
$$
where $c_2:=\|1/{\Ga(\cdot)}\|_{C[2\ve,1]}$, and combining all the above estimates, we find 
$$
RHS \le \int_{\al_1-\ve_1}^{\al_1} \left( \f{c_1c_2(\al_0-3\ve)}{\ve} t^{2\ve} - c_0 \right) J^{2-\al}\wt u\da.
$$
Therefore, choosing $\de>0$ sufficiently small such that $\f{c_1c_2(\al_0-3\ve)}{\ve} \de^{2\varepsilon} = c_0$, and then for $0<t<\de$, we can assert that 
$$
RHS\le,\not\equiv0 \mbox{ in $\Om$.}
$$
We finally obtain
$$
\left\{
\begin{alignedat}{2}
&\D^{(\mu)}_t v -\De v + p(x)v \le,\not\equiv 0  &\quad& \mbox{in $\Om\times(0,\de)$,}
\\
&v|_{t=0}=0 &\quad& \mbox{in $\Om$.}
\end{alignedat}
\right.
$$
From Corollary \ref{coro-geq0}, we assert that for any $x\in\Om$, there exists $t_x\in(0,\de)$ such that $v(x,t_x)>0$, that is
$$
J^2u(x,t_x) - J^2\wt u(x,t_x) > 0,\quad x\in\Om
$$
in view of the notation of $v$. This is a contradiction since the overposed data $u(x_0,\cdot)=\wt u(x_0,\cdot)$ in $(0,T)$ implies that $J^2u(x_0,t)=J^2{\wt u}(x_0,t)$ for any $0<t<T$. By contradiction, we must have
$$
\mu(\al)=\wt\mu(\al),\quad \al\in[0,1].
$$
This completes the proof of the theorem for the Dirichlet boundary condition case.
\end{proof}
\begin{proof}[\bf Proof of Theorem \ref{thm-IP}, Neumann boundary condition]
Similarly to the previous proof, we find that $v:= J^2u - J^2\wt u$ satisfies
$$
\left\{
\begin{alignedat}{2}
&\D^{(\mu)}_t v -\De v +p(x)v= \int_0^1 (\wt\mu(\al) - \mu(\al)) J^{2-\al}\wt u\da &\quad& \mbox{in $\Om\times(0,T)$,}
\\
& \pa_{\nu}v=0 &\quad& \mbox{on $\pa\Om\times(0,T)$,}
\\
&v|_{t=0}=0 &\quad& \mbox{in $\Om$.}
\end{alignedat}
\right.
$$
  Then applying Corollary \ref{coro-geq0'} and repeating the similar argument,
we can reach a contradiction and prove Theorem \ref{thm-IP}.
\end{proof}

\bigskip
\begin{rem}
Our method cannot work in the case when $\mu$ is not a finite oscillatory function. However, if we use the measurement data similar to the one in \cite{RZ16}, say, $u(x_0,\cdot)$ in $(0,\infty)$, it is expected to obtain the uniqueness of the inverse problem in determining the weight function.
\end{rem}
Indeed, since we now consider all the problem in the infinite time interval $(0,\infty)$, we can employ the Laplace transform $\wh\cdot(s)$ on both sides of the equation \eqref{equ-u-distri} with respect to $\mu,\wt\mu\in C[0,1]$, we find
$$
\left\{\begin{alignedat}{2}
& sw(s) \wh u(s) -\De \wh u(s) + p(x)\wh u(s)=0 &\quad& \mbox{in $\Om$,}
\\
& \wh u(s)=\wh g(s), &\quad& \mbox{on $\pa\Om$,}
\end{alignedat}\right.
$$
and 
$$
\left\{\begin{alignedat}{2}
& s\wt w(s) \wh{\wt u}(s) -\De \wh{\wt u}(s)  + p(x)\wh u(s) =0 &\quad& \mbox{in $\Om$,}
\\
& \wh{\wt u}(s)=\wh g(s) &\quad& \mbox{on $\pa\Om$,}
\end{alignedat}\right.
$$
for any $s>0$, where 
$$
w(s):=\int_0^1 s^\al \mu(\al) \da,\quad 
\wt w(s):=\int_0^1 s^\al \wt\mu(\al)\da.
$$
From the strong maximum principle for the elliptic equations, we see that $\wh u(s)$ and $\wh{\wt u}(s)$ are strictly positive in the domain $\Om$ for any $s>0$.

Now by taking the difference of the above systems, it turns out that the system for $v:=\wh u-\wh{\wt u}$ reads
$$
\left\{\begin{alignedat}{2}
& sw(s) v(s) -\De v(s)  + p(x)\wh u(s)
=- s\wh{\wt u}(s)(w(s)-\wt w(s)) &\quad& \mbox{in $\Om$,} \\
& \wh v(s)=0 &\quad& \mbox{on $\pa\Om$.}
\end{alignedat}
\right.
$$
We prove by contradiction. Let us assume that $\mu\ne\wt\mu$ in $[0,1]$ and we claim that there exists $s_0>0$ such that
$$
\int_0^1 s_0^\al (\mu(\al) - \wt\mu(\al)) \da \ne 0.
$$
This can be done by an argument similar to the proof of Theorem 2.2 in \cite{LLY-CMA}. Without loss of generality, we assume that 
$$
\int_0^1 s_0^\al (\mu(\al) - \wt\mu(\al)) \da > 0.
$$
Then we conclude from the strong maximum principle for the elliptic equations that $v(x;s_0)>0$ in $\Om$ because of $\wh{\wt u}(x;s_0)\ge0$, hence that
$$
\wh u(x;s_0) > \wh{\wt u}(x;s_0),\quad x\in\Om.
$$
This is a contradiction since $u(x_0,\cdot)=\wt u(x_0,\cdot)$ in $(0,\infty)$ implies that $\wh u(x_0;s)=\wh{\wt u}(x_0;s)$ for any $s>0$. By contradiction, we must have $\mu=\wt\mu$ in $[0,1]$.

\section{Concluding remarks}
\label{sec-rem}
In this paper, the initial-boundary value problem for the diffusion equation with distributed order derivatives was investigated. On the basis of eigenfunction expansion, we first gave a representation formula of the solution via Fourier series and showed the convergence as well as several estimates for the solution. In Proposition \ref{thm-fp}, we can relax the regularity of $g$ via the argument used in \cite{F14} but we do not discuss here. 

For the inverse problem, on the basis of Laplace transform, we first transferred the time-fractional diffusion equation to the corresponding elliptic equation with the Laplacian parameter and established a Harnack type inequality for this elliptic equation, which were further used to imply the uniqueness of the inverse problem in determining the weight function $\mu$ from one point observation provided the unknown weight function $\mu$ lies in the admissible set $\cU$. 

It should be mentioned here that the proof of the above uniqueness result heavily relies on the setting of finite oscillation function. The inverse problem in determining the weight function $\mu$ in general case, say, $\mu\not\in\cU$, by the overposed data $u(x_0,\cdot)$ in $(0,T)$  remains open. On the other hand, it will be more challenged to consider the inverse problem for the problem \eqref{equ-u-distri} with general elliptic operator. 


\end{document}